\newtheoremstyle{plainNoItalics}{}{}{\normalfont}{}{\bfseries}{.}{ }{}
\theoremstyle{plain}
\newtheorem{thm}{Theorem}[section]
\newtheorem{defn}[thm]{Definition}
\newtheorem{prop}[thm]{Proposition}
\newtheorem{exa}[thm]{Example}
\newcommand{\beq}{\begin{equation}}
\newcommand{\eeq}{\end{equation}}
\newcommand{\beqa}{\begin{eqnarray}}
\newcommand{\eeqa}{\end{eqnarray}}
\newcommand{\bit}{\begin{itemize}}
\newcommand{\eit}{\end{itemize}}
\newcommand{\bedef}{\begin{defn}}
\newcommand{\edefn}{\end{defn}}
\newcommand{\bpro}{\begin{prop}}
\newcommand{\epro}{\end{prop}}
\newcommand{\Dx}{\Delta x}
\newcommand{\Dy}{\Delta y}
\newcommand{\Dt}{\Delta t}
\newcommand{\eps}{\varepsilon}
\newcommand{\be}{{\bf e}}
\newcommand{\bx}{{\bf x}}
\newcommand{\bu}{{\bf u}}
\newcommand{\bn}{{\bf n}}
\newcommand{\bq}{{\bf q}}
\newcommand{\bU}{{\bf U}}
\newcommand\ds{ \displaystyle }
\title[SWEs with source terms]{High order asymptotic preserving and well-balanced schemes for the shallow water equations with source terms}
\keywords{Shallow water equations; Manning friction; asymptotic preserving; well-balanced; implicit-explicit Runge-Kutta; high order }
\begin{document}
	
\maketitle

\centerline{\scshape Guanlan Huang}
\medskip
{   \footnotesize
	\centerline{School of Mathematical Sciences, Xiamen University}
	\centerline{Xiamen, Fujian 361005, PR China}
	\centerline{glhuang@stu.xmu.edu.cn}
}

	\medskip

\centerline{\scshape Sebastiano Boscarino}
\medskip
{   \footnotesize
	\centerline{Department of Mathematics and Computer Science, University of Catania}
	\centerline{Catania 95125, Italy}
	\centerline{boscarino@dmi.unict.it}
}

	\medskip

\centerline{\scshape Tao Xiong } 
\medskip
{   \footnotesize
	\centerline{School of Mathematical Sciences, Fujian Provincial Key Laboratory of Mathematical Modeling} 
	\centerline{and High-Performance Scientific Computing, Xiamen University}
	\centerline{Xiamen, Fujian 361005, PR China}
	\centerline{txiong@xmu.edu.cn}
}

\bigskip

\begin{abstract}
	In this study, we investigate the Shallow Water Equations incorporating source terms accounting for Manning friction and a non-flat bottom topology.
	Our primary focus is on developing and validating numerical schemes that serve a dual purpose: firstly, preserving all steady states within the model, and secondly, maintaining the late-time asymptotic behavior of solutions, which is governed by a diffusion equation and coincides with a long time and stiff friction limit. 
 
	Our proposed approach draws inspiration from a penalization technique adopted in {\it{[Boscarino et. al, SIAM Journal on Scientific Computing, 2014]}}. By employing an additive implicit-explicit Runge-Kutta method, the scheme can ensure a correct asymptotic behavior for the limiting diffusion equation, without suffering from a parabolic-type time step restriction which often afflicts multiscale problems in the diffusive limit. Numerical experiments are performed to illustrate high order accuracy, asymptotic preserving, and asymptotically accurate properties of the designed schemes.
\end{abstract}

\vspace{0.1cm}

\section{Introduction}
\label{sec1}
\setcounter{equation}{0}
\setcounter{figure}{0}
\setcounter{table}{0}

The Shallow Water Equations (SWEs) are a set of equations derived from the more comprehensive Navier-Stokes equations, which govern the behavior of fluids. This hyperbolic system finds widespread application in modeling fluid dynamics in various natural environments, including channels, rivers, and oceans. 
The study of these equations carries significant practical implications, with numerous applications extending to crucial areas such as the prediction of storm surge levels, tides, and alterations in coastlines caused by events like hurricanes, among many others \cite{xing2005high,kurganov2002central,xing2006high,xing2006new,huang2022high}.

In recent years, the SWEs have become more and more important in atmospheric flows, especially with different space and time scales.
In this paper, we consider the SWEs with Manning friction and a non-flat bottom topology, which can be written as 
\begin{equation}
\left\{
 \begin{array}{ll}
  \partial_t h + \nabla \cdot \bq   = 0, \\ [3mm]
  \partial_t\bq + \nabla \cdot \left(\dfrac{\bq\otimes \bq}{h}\right) + \nabla\left(\dfrac{g}{2}h^2\right) = -gh\nabla b - \gamma \bq,
\end{array}\right.
\label{SWe_MB1}
\end{equation}
where $h$ is the depth of the water layer, $\bu=\bq/h$ is the flow velocity, which are defined on a time-space domain $(t,\bx)\in\mathbb{R}^+\times\Omega$; $g$ is the gravitational constant, $b(\bx)$ is the bottom topography which is independent of time; $\otimes$ denotes the Kronecker product, and $\gamma \bq$ models the bottom friction. 
Here $\gamma = \dfrac{gk^2|\bq|}{ h^{\eta}}$ with $|\bq|$ being the $L^2$ norm of $\bq$, and $k$ is the Manning coefficient to determine the intensity of the friction exerted by the bottom of the water.
The larger the $k$ is, the greater the friction exerted by the water. $\eta$ is a parameter which is usually taken to be $7/3$. 

To investigate the {\it late-time} behavior of solutions to the system (\ref{SWe_MB1}),  and following the lines of \cite{bulteau2020fully}, a behavior of $h$ and $\bq$ in a long time simulation and dominant friction is considered. Such an asymptotic behavior is governed by a diffusive regime. For the convenience of analysis, we introduce a small parameter $\eps$ in order to scale the time $t$ and the friction coefficient $k$, as follows: 
\begin{equation}
  \label{Dim_kt}
  t \leftarrow \frac{\hat{t}}{ \eps}\, ,\qquad {k\leftarrow\frac{\hat{k}}{\eps}}.\,
  \end{equation}
Then the system \eqref{SWe_MB1} can be rewritten as
\begin{equation}
\left\{
 \begin{array}{ll}
  \eps \partial_{\hat{t}}h + \nabla \cdot \bq   = 0, \\ [3mm]
\ds  \eps\partial_{\hat{t}}\bq + \nabla \cdot \left(\dfrac{\bq\otimes \bq}{h}\right) + \nabla\left(\dfrac{g}{2}h^2\right) = -gh\nabla b - \frac{1}{\eps^2}\widehat{\gamma} \bq,
\end{array}\right.
\label{SWe_MB2}
\end{equation}
with $\widehat{\gamma} = \dfrac{g\hat{k}^2|\bq|}{ h^{\eta}}$.
In \cite{BLeFT} (and the references therein), it was shown that, in the modeling of shallow water problems when the friction is particularly strong and very much dominates over the advection effects, the asymptotic regime, i.e. $\varepsilon \to 0$, associated with (\ref{SWe_MB2}) is governed by the nonlinear diffusion equation 
\begin{equation}\label{LIMIT}
h_t = \nabla\cdot \left( \frac{\sqrt{h}}{\kappa(h)} \frac{\nabla h}{\sqrt{|\nabla h|}}\right).
\end{equation}
In the next section, we show that by the Chapman-Enskog expansions of the variables $\bq$ and $h$ in term of the parameter $\varepsilon$, we get the limiting nonlinear diffusion equation (\ref{LIMIT}) when $\varepsilon \to 0$. 

Now, denoting 
\begin{equation}
\label{Dim_q}
\widehat{\bq} = \frac{\bq}{\eps},
\end{equation}
the system \eqref{SWe_MB2} becomes
\begin{equation}
\left\{
 \begin{array}{ll}
  \eps \partial_{\hat{t}}h + \eps\nabla \cdot \widehat{\bq}   = 0, \\ [3mm]
  \eps^2 \partial_{\hat{t}}\widehat{\bq} + \eps^2\nabla \cdot \left(\dfrac{\widehat{\bq}\otimes \widehat{\bq}}{h}\right) + \nabla\left(\dfrac{g}{2}h^2\right) = -gh\nabla b - \hat{\gamma} \widehat{\bq}.
\end{array}\right.
\label{SWe_MB3}
\end{equation}
Dividing the first sub-equation of system \eqref{SWe_MB3} by $\eps$, the second one by $\eps^2$,  and dropping the hats for clarity, then we can obtain the non-dimensional system
\begin{equation}
\left\{
 \begin{array}{ll}
  \partial_t h + \nabla \cdot \bq   = 0, \\ [3mm]
 \ds \partial_t\bq + \nabla \cdot \left(\dfrac{\bq\otimes \bq}{h}\right) + \frac{1}{\eps^2}\nabla\left(\dfrac{g}{2}h^2\right)
  = -\frac{1}{\eps^2}gh\nabla b - \frac{1}{\eps^2}\gamma \bq.
\end{array}\right.
\label{SWe_MB4}
\end{equation}
Notice that the nonlinear SWEs (\ref{SWe_MB4}) belong to the family of hyperbolic conservation laws with stiff source terms \cite{BLeFT}, i.e.,
\begin{equation}\label{ConsLaw}
U_t + \nabla\cdot F(U)  =	\frac{1}{\varepsilon^2}S(U),
\end{equation}
where $U=(h,\bq)^T$, $ \nabla\cdot F(U) = \left( \nabla \cdot \bq, \ds \nabla \cdot \left(\frac{\bq\otimes \bq}{h}\right)\right)^T$, and $S(U) = -\left(0,  \nabla\left(\dfrac{g}{2}h^2\right) +gh\nabla b + \gamma \bq\right)^T$. 

Solving (\ref{ConsLaw}) numerically is a challenging task for many reasons. Firstly, such system often admits non-trivial steady state solutions in which the source term balances the effect of the flux gradients. 
There are many interesting physical phenomena, which are small perturbations near these steady states. A standard numerical scheme cannot capture this equilibrium state or small perturbations near such equilibrium state, unless a much refined mesh is adopted \cite{xing2005high,wu2021uniformly,X2017}. 
A significant scheme for the SWEs with source terms is proposed by Bermudez and Vazquez \cite{bermudez1994upwind}, which balances the gradient of pressure by source terms at the discrete level, so that the equilibrium state can be maintained in coarse mesh subdivisions.
Then many schemes are designed to preserve the steady state \cite{leveque1998balancing,zhou2001surface,xing2005high,xing2006high,xing2006new,xing2006,CGP2006,ern2008well,noelle2006well,rhebergen2008discontinuous,Xing14,X2017,Kurganov18}, including finite difference, finite volume, and discontinuous Galerkin methods.
All the above works consider that the solutions are not affected by the friction, while there are many situations that the friction can not be ignored. 
The  additional friction term makes the well-balanced work more difficult, especially for situation that the friction is dominant. 
When the system \eqref{SWe_MB4} reaches steady state, it becomes 
\begin{equation}
  \left\{
   \begin{array}{ll}
    \nabla \cdot \bq   = 0, \\ [3mm]
 \ds   \nabla \cdot \left(\dfrac{\bq\otimes \bq}{h}\right) + \frac{1}{\eps^2}\nabla\left(\dfrac{g}{2}h^2\right)= -\frac{1}{\eps^2}gh\nabla b - \frac{1}{\eps^2}\gamma \bq.
  \end{array}\right.
  \label{SWe_MB6}
\end{equation}
In case of a still water $\bq=\mathbf{0}$, the system has a steady state named ``lake at rest'' \cite{yang2021high}, that is
\begin{equation}
	\label{Still-water}
	h(\bx,t) + b(\bx) = C, \qquad \bq(\bx,t) = \mathbf{0}.
\end{equation}
For a moving water steady state $\bq\ne \mathbf{0}$, it makes the design of a well-balanced scheme much more challenging. Very limited studies include traditional time discretizations \cite{michel2017well,gomez2021collocation}, or semi-implicit time discretizations \cite{song2011unstructured,xia2017efficient,xia2018new,chertock2015well}. 
However, in most of these works, the stiff friction is caused by the vanishing of water depth $h$.
Another stiff friction is caused by a large coefficient $k$ \cite{bulteau2020fully}, which is considered in this work. Besides, for simplicity, we only consider the still water steady state \eqref{Still-water}.

Secondly, a system as \eqref{ConsLaw} with a small parameter  $\varepsilon$ may relax to a reduced system as $\eps$ vanishes. Under suitable conditions \cite{CLL}, the solutions of the system (\ref{ConsLaw}) will converge to the solutions of the limiting system, see for example (\ref{LIMIT}). It is not at all obvious that a numerical scheme adopted for the solution of the system (\ref{ConsLaw}) will become (for given space and time discretization parameters)  a consistent discretization for the limiting equation, as the small parameter $\varepsilon$ vanishes. When this occurs, the scheme is said to be {\em asymptotic preserving} (AP). The AP scheme was proposed by S. Jin \cite{jin1999efficient} and we refer the readers to the review papers \cite{hu2017ap,jin2022asymptotic} for more work about AP schemes.
Recently, Bulteau et. al. \cite{bulteau2020fully} designed an AP scheme for system \eqref{SWe_MB4} by using an explicit time discretization, combined with finite volume and a well-balanced modification for space discretization. The scheme has been proven to be both AP and well-balanced. However, due to an explicit time discretization, the scheme might be subject to severe time step restrictions in order to maintain stability, as the eigenvalues of the system being $\lambda_1 = \bu\cdot{\bf{n}} + c/\eps$ and $\lambda_2 = \bu\cdot{\bf{n}} - c/\eps$ along the normal direction $\bn$, where $c=\sqrt{gh}$ representing the speed of sound. Such eigenvalues lead to a time step for an explicit scheme as
\begin{equation}
\Delta t = \text{CFL} \frac{\Delta x}{\max(|\bu|+c/\eps)}\sim \eps \Delta x,
\end{equation}
where $\Delta t$ is the time step size,  $\Delta x$ is the mesh size, and CFL is the time stability CFL number.

In the present paper, we will develop a numerical scheme to approximate the solution of \eqref{SWe_MB4}, which accurately captures the steady state solutions and  is able to restore the asymptotic regime, i.e., the scheme becomes a consistent discretization for the limiting equation (\ref{LIMIT}) as $\varepsilon \to 0$ (AP). Our main focus is to design a class of AP implicit-explicit (IMEX) high order finite difference weighted essentially non-oscillatory (WENO) schemes. However, the AP property only guarantees the consistency of the scheme, and in general does not guarantee the high order accuracy in the limit for $\varepsilon \ll 1$, i.e. the order of accuracy may degrade. We will show that under some assumptions, in the limit for $\varepsilon \to 0$, the scheme maintains its order of temporal accuracy, and we say that the scheme is {\em asymptotically accurate} (AA) for the limiting system when $\varepsilon \to 0$. To achieve high order temporal accuracy, IMEX Runge-Kutta (RK) schemes \cite{boscarino2014high,boscarino2016high} represent a powerful tool for the time discretization of stiff systems of the form (\ref{ConsLaw}). However, directly applying IMEX RK schemes to the SWEs (\ref{SWe_MB4}) are not very efficient. In the diffusive regime, i.e. $\varepsilon \to 0$, such schemes become consistent but explicit schemes for the diffusive limiting equation (\ref{LIMIT}). Therefore, these schemes suffer from a usual parabolic time step restriction $\Delta t = O(\Delta x^2)$, for given space and time discretization parameters $\Delta x$ and $\Delta t$. 
In order to avoid this parabolic time-step restriction, we introduce a new reformulation of the problem (\ref{SWe_MB4}) based on the addition of two opposite diffusive terms in the first equation in (\ref{SWe_MB4}), which has been introduced in \cite{boscarino2013flux, boscarino2013implicit, boscarino2014high,xiong2022high}, and the resulting scheme is consistent and relaxed to an implicit scheme in the diffusive limit, therefore avoiding the typical parabolic restriction. This strategy is very effective. Furthermore, we will show that, under this reformulation of the system (\ref{SWe_MB4}), the scheme converges to a high order implicit scheme in the diffusive limit (\ref{LIMIT}), namely the scheme is AA.
 
The rest of this paper is organized as follows.
In Section \ref{sec2}, we will formally derive the limiting equation for the SWEs with a non-flat topography and Manning friction. An IMEX time discretization and finite difference WENO reconstruction with well-balanced property will also be introduced. Formal analyses of AP and AA properties of the scheme are provided in Section \ref{sec4}. Numerical experiments are performed in Section \ref{sec5}, followed by concluding remarks in Section \ref{sec6}. 

\section{Diffusive limit and numerical scheme }
\label{sec2}
\setcounter{equation}{0}
\setcounter{figure}{0}
\setcounter{table}{0}

In this section, we will first briefly review the diffusive limit for the SWEs with strong friction and a bottom topography, which leads to a nonlinear degenerate $p$-Laplacian equation. Then we will present an AP scheme, which can well capture this nonlinear diffusive limit. This approach consists of adding and subtracting the limiting nonlinear diffusive term in the water depth equation, with different time discretizations and we will show that in the diffusive limit the numerical scheme becomes an implicit time discretization for the limiting system, which can avoid a parabolic time step restriction. This technique has been developed in \cite{boscarino2013flux, boscarino2013implicit, boscarino2014high} and used in \cite{peng2020stability,xiong2022high}. 

For this system, since all terms with explicit and implicit time discretizations are seperated, we employ an additive implicit-explicit (IMEX) scheme, with a multi-stage RK method to achieve high order accuracy. In space, we adopt a high order finite difference WENO reconstruction for convective terms \cite{shu1998essentially,jiang1996efficient}, combined with central difference discretizations for second order derivative terms. With a careful treatment in space, the well-balanced property for a hydrostatic equilibrium state is also achieved. For our proposed scheme, we can formally prove it is AP and AA.

\subsection{Diffusive limit}

Firstly, let us briefly review the limiting system for the SWEs
with friction and a non-flat bottom \eqref{SWe_MB4}, under the diffusive scaling with \eqref{Dim_kt} and \eqref{Dim_q}. We denote $H=h+b$ as the surface level of water, and assume the following Chapman-Enskog expansions  \cite{bulteau2020fully}
\begin{equation}
	\label{Exp1}
	\left\{
	\begin{array}{ll}
		h(\bx,t) = h_0(\bx,t) +\eps h_1(\bx,t) +\eps^2 h_2(\bx,t) +  \cdots,\\ [3mm]
		\bq(\bx,t) = \bq_0(\bx,t)+ \eps \bq_1(\bx,t) + \eps^2 \bq_2(\bx,t) + \cdots,
	\end{array}\right.
\end{equation}
with
\begin{equation}
\label{Exp2}
H(\bx,t) = h_0(\bx,t) + b(\bx) +\eps h_1(\bx,t) +\eps^2 h_2(\bx,t) +  \cdots.
\end{equation}
Here the bottom topography $b(\bx)$ is independent of time. Substituting \eqref{Exp1} into \eqref{SWe_MB4}, for the leading order terms, we have
\begin{subequations}
    \begin{align}
        &\partial_t h_0 + \nabla \cdot \bq_0   = 0,
        \label{lim_E1}\\[3mm]
        &\nabla\left(\dfrac{g}{2}h_0^2\right) +  gh_0\nabla b =  - \gamma_0 \bq_0,
        \label{lim_E2}
    \end{align}
\end{subequations}
with the nonlinear friction coefficient $\gamma_0 = gk^2|\bq_0|/h_0^{\eta}$. Denoting $H_0(\bx,t) = h_0(\bx,t) + b(\bx)$, since 
\begin{equation}
\nabla\left(\dfrac{g}{2}h_0^2\right)+gh_0\nabla b=gh_0\nabla h_0+gh_0\nabla b=gh_0\nabla H_0,
\label{eq2.4}
\end{equation} 
with $\gamma_0 = gk^2|\bq_0|/h_0^{\eta}$, substituting \eqref{eq2.4} into \eqref{lim_E2} we obtain
\begin{equation}
gk^2|\bq_0|^2/h_0^\eta=|\gamma_0\bq_0|=gh_0|\nabla H_0|, 
\end{equation}
which yields $|\bq_0|=\sqrt{h_0^{\eta+1}|\nabla H_0|/k^2 }$, and 
\eqref{lim_E2} becomes
\begin{equation}
\bq_0 = -\sqrt{\frac{h_0^{\eta+1}}{k^2}}\frac{\nabla H_0}{\sqrt{|\nabla H_0|}}.
\label{lim_E3}
\end{equation}
Substituting \eqref{lim_E3} into \eqref{lim_E1}, it leads to the following limiting equation
\begin{equation}
	\partial_t h_0 = \nabla\cdot\left( \sqrt{\frac{h_0^{\eta+1}}{k^2}}\frac{\nabla H_0}{\sqrt{|\nabla H_0|}}\right). \label{lim_E4-1}
\end{equation}
This limiting equation \eqref{lim_E4-1} is a non-stationary $p$-Laplacian equation with $p=3/2$. $p$-Laplacian equations are degenerate parabolic equations, which arise in many physical problems \cite{diaz1981estimates}, and they have been studied both theoretically and numerically in several works, such as \cite{andreu1999existence,herrero1981asymptotic,kamin1988fundamental,de1999regularity,eom2020large,Crispo2019global,garcia1987existence,barrett1993finite,barrett1994finite}, and references therein. Due to the limiting equation itself is already nonlinear and degenerate, it makes the designing of an efficient numerical scheme for \eqref{SWe_MB4} which can capture the diffusive limit \eqref{lim_E4-1} very challenging. In the following, we will propose an efficient AP scheme, which is high order both in space and in time and can well capture the corresponding limit.

\subsection{IMEX time discretization}
We consider an IMEX time discretization for \eqref{SWe_MB4}, which only uses implicit time discretizations for stiff and diffusion terms, while convective terms are discretized explicitly, so that we can achieve a large time step condition, while avoiding to solve a very complicate nonlinear system. We start with writing \eqref{SWe_MB4} in the following form of ordinary differential equations 
\begin{equation}
U_t =  F(U) + G(U),
\label{Add_IMEX_PDE}
\end{equation}
with $U=(h,\bq)^T$ and
\begin{equation}\label{split-AD}
F(U) =
\begin{pmatrix}
	-\nabla \cdot \bq - \mu(\eps)\nabla\cdot\left( \sqrt{\frac{h^{\eta+1}}{k^2}}\frac{\nabla H}{\sqrt{|\nabla H|}}\right)  \\ \, \\
	- \nabla \cdot \left(\dfrac{\bq\otimes \bq}{h}\right)
	\end{pmatrix},\,
G(U) =
\begin{pmatrix}
	 \mu(\eps) \nabla\cdot\left( \sqrt{\frac{h^{\eta+1}}{k^2}}\frac{\nabla H}{\sqrt{|\nabla H|}}\right)  \\ \, \\
	 - \dfrac{1}{\eps^2}\nabla\left(\dfrac{g}{2}h^2\right)  -\dfrac{1}{\eps^2}gh\nabla b - \dfrac{1}{\eps^2}\gamma \bq
	\end{pmatrix},
\end{equation}
where we have added and substracted a term for the equation of water depth $h$, based on the limiting equation (\ref{lim_E4-1}). Here the weighted function $\mu(\varepsilon)$ is such that $\mu:\mathbb{R}^+ \to [0,1]$ and $\mu(0)=1$. When $\varepsilon$ is not small, there is no need to add and subtract the limiting term, therefore $\mu(\varepsilon)$ will be small in such a hyperbolic regime, i.e., $\mu(\varepsilon) \approx 0$. The precise choice of $\mu(\varepsilon)$ depends not only on $\eps$ but also the mesh size or the time step as discussed in \cite{boscarino2014high}, which will be specified later.
Now let us focus on the time discretization while keeping the space continuous. The spatial discretizations will be discussed in the next section.
\subsubsection{\bf{First order IMEX scheme}} 
We begin with a first order IMEX scheme for (\ref{Add_IMEX_PDE}) with (\ref{split-AD}). The scheme reads
\begin{subequations}
	\begin{align}
		&\dfrac{h^{n+1} - h^n}{\Delta t}+ \nabla \cdot \bq^n +\mu(\eps) \nabla\cdot\left( \sqrt{\dfrac{h^{\eta+1}}{k^2}}\dfrac{\nabla H}{\sqrt{|\nabla H|}}\right)^n =  \mu(\eps)\nabla\cdot\left( \sqrt{\dfrac{h^{\eta+1}}{k^2}}\frac{\nabla H}{\sqrt{|\nabla H|}}\right)^{n+1}, \label{SWE_s3-1}\\[3mm]
		&\dfrac{\bq^{n+1} -\bq^n}{\Delta t} + \nabla \cdot \left(\dfrac{\bq\otimes \bq}{h}\right)^n + \dfrac{1}{\eps^2}\nabla\left(\dfrac{g}{2}h^2\right)^{n+1} = -\dfrac{1}{\eps^2}gh^{n+1}\nabla b - \dfrac{1}{\eps^2}\gamma^{n+1} \bq^{n+1}. \label{SWE_s3-2}
	\end{align}
	\label{SWE_s3}
\end{subequations}
Here we take the weighted function to be $\mu(\eps)=e^{-\eps^2/\Delta x}$ with $\Dx$ being the mesh size \cite{boscarino2013flux, boscarino2013implicit}.
This weighted function depends on $\eps$ and $\Dx$, which ensures that as $\eps$ goes to zero, $\mu(\eps)$ will approach $1$, while it approaches $0$ when $\eps$ is of $\mathcal{O}(1)$ and $\Dx$ is small enough. In this case, as $\eps\rightarrow 0$, the second and third terms on the left hand side cancel with each other, and the scheme \eqref{SWE_s3-1} formally converges to 
\begin{equation}
\frac{h^{n+1} - h^n}{\Delta t} - \nabla\cdot\left\{ \sqrt{\frac{h^{\eta+1}}{k^2}}\frac{\nabla H}{\sqrt{|\nabla H|}}\right\}^{n+1}=0,
\end{equation}
which is an implicit time discretization for the limting equation \eqref{lim_E4-1}, so that a parabolic type time discretization namely $\Dt = \mathcal{O}(\Dx^2)$ can be avoided. 

However, solving \eqref{SWE_s3} we face two important issues. Let us 
rewrite \eqref{SWE_s3-1} as
\begin{equation}
\label{IMEX1_ellip1}
h^{n+1} = h_{\star} + \Dt\mu(\eps)\nabla\cdot\left\{ \sqrt{\frac{h^{\eta+1}}{k^2}}\frac{\nabla H}{\sqrt{|\nabla H|}}\right\}^{n+1},
\end{equation}
with
\begin{equation*}
h_{\star} = h^n - \Dt\left\{\nabla \cdot \bq^n +\mu(\eps) \nabla\cdot\left( \sqrt{\frac{h^{\eta+1}}{k^2}}\frac{\nabla H}{\sqrt{|\nabla H|}}\right)^n\right\}.
\end{equation*}
Since $H^{n+1} = h^{n+1} + b$, \eqref{IMEX1_ellip1} is a fully nonlinear system of $h^{n+1}$, especially the diffusion coefficient depends on $\sqrt{|\nabla H|}$ in the denominator. The first issue is that when the water is at or close to rest, we have $\nabla H = 0$ or $\nabla H$ close to $0$. In this case, this nonlinear diffusion term cannot be linearized by using the approach in \cite{xiong2022high}. Instead we have to take $\nabla H$ in the numerator and denominator of \eqref{IMEX1_ellip1} to be at the same time level to solve a nonlinear system, in order to avoid such a singularity in the denominator. We adopt a simple Picard iteration to solve this nonlinear system as follows:
\begin{itemize}
\item  Starting from the initial guess $h^{n+1,0}=h^n$, at each iteration point, we first compute $|\nabla H|$ (based on a numerical approximation), if $|\nabla H|<\delta$ where $\delta$ is a small value (e.g., we take $\delta = 10^{-9}$ in our numerical tests, unless otherwise specified), we stop the iteration and take $h^{n+1}=h^{n+1,0}=h^n$; 

\item Otherwise, we continue the iteration by taking 
\begin{equation}
h^{n+1,\ell+1} = h_{\star} +  \Dt\mu(\eps)\nabla\cdot\left\{\frac{ \left(\sqrt{\frac{h^{\eta+1}}{k^2}}\right)^{n+1,\ell}}{\sqrt{|\nabla H|^{n+1,\ell}}} \nabla H^{n+1,\ell+1}\right\}, \quad \ell \ge 0.
\end{equation}
The iteration is stopped when $\| h^{n+1,\ell+1} - h^{n+1,\ell}\| \le \delta $. Here $\delta$ is taken to be the same as above, and an $L^1$ norm is taken in practice.  
\end{itemize} 
By taking this simple Picard iteration, as we can see, when initially the solution is at a hydrodynamic equilibrium, that is $H=\text{Const.}$, due to the iteration is stopped at the first step, such an equilibrium can be preserved.

After obtaining $h^{n+1}$, the second issue is how to update $\bq^{n+1}$ by \eqref{SWE_s3-2}, since the friction coefficient $\gamma^{n+1} = \dfrac{gk^2|\bq^{n+1}|}{(h^{n+1})^{\eta}}$ also depends on $\bq^{n+1}$. We first rewrite \eqref{SWE_s3-2} as
\begin{equation}
\label{SWE_s4}
\left(1 + \frac{\Dt }{\eps^2}\,\frac{gk^2|\bq^{n+1}|}{(h^{n+1})^{\eta}}\right)\bq^{n+1}
= \bq_{\star},
\end{equation}
here
\begin{equation}
	\label{qstar}
  \bq_{\star} =  \bq^n - \Dt\nabla \cdot \left(\frac{\bq\otimes \bq}{h}\right)^n
- \frac{\Dt}{\eps^2}\nabla\left(\frac{g}{2}h^2\right)^{n+1}
-\frac{\Dt}{\eps^2}gh^{n+1}\nabla b.
\end{equation}
Taking an $L^2$ norm on both sides of \eqref{SWE_s4}, it becomes a quadratic equation for $|\bq^{n+1}|$
\begin{equation}
\frac{\Dt }{\eps^2}\,\frac{gk^2}{(h^{n+1})^{\eta}}|\bq^{n+1}|^2 + |\bq^{n+1}| - |\bq_{\star}| =0.
\label{SWE_s5}
\end{equation}
\eqref{SWE_s5} has two real roots, and the positive value is the reasonable one
\begin{equation*}
\ds |\bq^{n+1}| = \frac{-\eps^2+\sqrt{\eps^4+\ds \frac{4\Dt gk^2}{(h^{n+1})^{\eta}}|\eps^2\bq_{\star}|}}{\ds \frac{2\Dt gk^2}{(h^{n+1})^{\eta}}}.
\end{equation*}
Substituting it into \eqref{SWE_s4}, we update $\bq^{n+1}$ by 
\begin{equation}
\label{qn1}
\bq^{n+1} = \frac{2\eps^2\bq_{\star}}{\eps^2+\sqrt{\eps^4+ \ds \frac{4\Dt gk^2}{(h^{n+1})^{\eta}}|\eps^2\bq_{\star}|}}.
\end{equation}
However, we note that as $\eps\rightarrow 0$, numerically in \eqref{qn1} we face an issue of $\frac{0}{0}$, and in \eqref{qstar} a dividing by $\eps^2$. Both will lead to numerical instabilities. To avoid these two stiff issues, from \eqref{qstar}, we compute $\eps^2\bq_{\star}$ into one by
\begin{equation*}
    \eps^2 \bq_{\star} = \eps^2\bq^n - \eps^2\Dt\nabla \cdot \left(\frac{\bq\otimes \bq}{h}\right)^n- \Dt \nabla\left(\frac{g}{2}h^2\right)^{n+1}-\Dt gh^{n+1}\nabla b.
\end{equation*}
If the solution is at a hydrostatic equilibrium, namely $\nabla\left(\dfrac{1}{2}h^2\right)=-h\nabla b$ and $\bq= \mathbf{0}$,
we would expect $\bq$ always to be $\mathbf{0}$. However, when $\eps$ is small, due to round-off errors, the updating of $\bq^{n+1}$ by \eqref{qn1} 
is unstable. In this case, if $|\eps^2\bq_\star|
<\eta $ and $|\bq^n|<\eta$ where $\eta$ is taken to be $10^{-12}$ at the round-off error level, we set $\bq^{n+1}=\mathbf{0}$.

{\subsubsection{\bf{High order IMEX scheme}} 
The first order IMEX scheme \eqref{SWE_s3} can be generalized to high order with an additive IMEX RK method, such procedure has been introduced in \cite{boscarino2014high}.
For the {additive} system (\ref{Add_IMEX_PDE}), an additive IMEX RK time discretization is characterized by the double Butcher $tableau$ \cite{butcher2016},
\begin{equation}\label{DBT2}
\begin{array}{c|c}
\tilde{\mathbf{c}} & \tilde{A}\\
\hline
\vspace{-0.25cm}
\\
& \tilde{\mathbf{b}^T} \end{array} \ \ \ \ \ \qquad
\begin{array}{c|c}
{\mathbf{c}} & {A}\\
\hline
\vspace{-0.25cm}
\\
& {\mathbf{b}^T} \end{array},
\end{equation}
where $\tilde{A} = (\tilde{a}_{ij})$ is an $s \times s$ lower triangular matrix, i.e. $\tilde{a}_{ij}=0$ for $j \geq i$, leading to an explicit scheme, while $A = ({a}_{ij})$ is an  $s \times s$ matrix for an implicit scheme. A diagonally implicit RK (DIRK) scheme with $a_{ij}=0$ for $j > i$ is used, which is simple and efficient for implicit discretization. Other vectors are $\tilde{\mathbf{c}}=(\tilde{c}_1,...,\tilde{c}_s)^T$, $\tilde{\mathbf{b}}=(\tilde{b}_1,...,\tilde{b}_s)^T$, and $\mathbf{c}=(c_1,...,c_s)^T$, $\mathbf{b}=(b_1,...,b_s)^T$, where $\tilde{\mathbf{c}}$ and $\mathbf{c}$ satisfy the relation
$\tilde{c}_i = \sum_{j=1}^{i-1} \tilde a_{ij}$, $c_i = \sum_{j=1}^{i} a_{ij}$ for $i = 1, ..., s$.

A sketched procedure of the additive IMEX RK scheme is given as follows. Starting from $U^n$, for inner stages $i = 1,\ldots,s$:
\bit
\item Compute the solution $U_{\star}^{(i)}$ from previous known stages
\begin{equation}
\label{ustar}
 U_{\star}^{(i)} = U^n + \Delta t\sum_{j=1}^{i-1}\tilde{a}_{ij}F(U^{(j)})
               + \Delta t\sum_{j=1}^{i-1}a_{ij}G(U^{(j)}).
\end{equation}
\item Update $U^{(i)}$ with
  \begin{equation}
  \label{ui}
  U^{(i)} = U_{\star}^{(i)} + \Delta t a_{ii}G(U^{(i)}).
  \end{equation}
\eit
After obtaining all inner stage values, we update the solution $U^{n+1}$ from
\begin{equation}
  \label{AD_high_final}
U^{n+1} = U^n + \Delta t\sum_{i=1}^{s}\tilde{b}_{i}F(U^{(i)})
+ \Delta t\sum_{i=1}^{s}b_{i}G(U^{(i)}).
\end{equation}

Applying \eqref{ustar}-\eqref{AD_high_final} to \eqref{Add_IMEX_PDE} with \eqref{split-AD}, in components, for $U_{\star}^{(i)}$ in \eqref{ustar}, they are
\begin{subequations}
	\label{ustarc}
\begin{align}
h_{\star}^{(i)} = \, & h^n - \Delta t\sum_{j=1}^{i-1}\tilde{a}_{ij}\left\{\nabla \cdot \bq^{(j)}  + \mu(\eps)\nabla\cdot\left( \sqrt{\frac{h^{\eta+1}}{k^2}}\frac{\nabla H}{\sqrt{|\nabla H|}}\right)^{(j)}
\right\} \\[4mm]
 & + \Delta t\sum_{j=1}^{i-1}a_{ij} \mu(\eps)\nabla\cdot\left( \sqrt{\frac{h^{\eta+1}}{k^2}}\frac{\nabla H}{\sqrt{|\nabla H|}}\right)^{(j)}, \notag
\\[4mm]
\bq^{(i)}_{\star} = \, &  \bq^n  - \Delta t\sum_{j=1}^{i-1}\tilde{a}_{ij}\nabla \cdot \left(\frac{\bq\otimes \bq}{h}\right)^{(j)}- \Delta t\sum_{j=1}^{i-1}a_{ij}\left\{\frac{1}{\eps^2}\nabla\left(\frac{g}{2}h^2\right)  + \frac{1}{\eps^2}gh\nabla b + \frac{1}{\eps^2}\gamma\bq
\right\}^{(j)},
\end{align}
\end{subequations}
and for $U^{(i)}$ in \eqref{ui}, they are
\begin{subequations}
	\label{uic}
\begin{align}
\label{uic_1}
&h^{(i)} = h_{\star}^{(i)} + a_{ii}\Dt \mu(\eps)\nabla\cdot\left( \sqrt{\frac{h^{\eta+1}}{k^2}}\frac{\nabla H}{\sqrt{|\nabla H|}}\right)^{(i)},
\\
\label{uic_2}
&\bq^{(i)} = \bq_{\star}^{(i)} - a_{ii}\frac{\Dt }{\eps^2}\left\{\nabla\left(\frac{g}{2}h^2\right)  + gh\nabla b + \gamma\bq\right\}
^{(i)}.
\end{align}
\end{subequations}
The nonlinear system \eqref{uic_1} is also solved for $h^{(i)}$ by using a simple Picard iteration, which is almost the same as the first order scheme.
The updating $\bq^{(i)}$ is similar to \eqref{qn1}, here it reads
\begin{equation}
  \bq^{(i)} = \frac{2\eps^2\bq_{\star\star}^{(i)}}{\eps^2+\sqrt{\eps^4+ \frac{4a_{ii}\Dt gk^2}{(h^{(i)})^{\eta}}|\eps^2\bq_{\star\star}^{(i)}|}},
\end{equation}
with 
\begin{equation*}
    \eps^2 \bq_{\star\star}^{(i)} = \eps^2 \bq_{\star}^{(i)}  - a_{ii}\Dt \left\{ \nabla \left(\frac{g}{2}h^2\right)  + gh\nabla b \right\}^{(i)}.
\end{equation*}
For the high order IMEX RK scheme, as discussed in \cite{boscarino2019high,boscarino2022high},
 a stability property is needed to ensure that the scheme is both AP and AA.
 Specifically, we require that the IMEX RK scheme is \emph{globally stiffly accurate} (GSA)  with $\tilde{\mathbf{b}}^T=\be_s^T\tilde{A}$ for the explicit part of the scheme and  $\mathbf{b}^T=\be_s^TA$ for  the implicit one where  $\be_s=(0,\cdots,0,1)^T$ \cite{hairer1993solving2}. Moreover, when the IMEX RK scheme is GSA, the numerical solution coincides with the last stage of the method, i.e., $U^{n+1} = U^{(s)}$. From now on, we only consider GSA IMEX RK schemes.

\subsection{High order spatial discretizations}\label{Dspace}
For spatial approximations of \eqref{split-AD}, we follow \cite{huang2022high} by using a finite difference WENO reconstruction for convective terms, while a high order compact difference method for second order diffusion terms. To preserve a hydrostatic equilibrium state, a well-balanced reconstruction \cite{xing2005high} between the gradient of pressure and bottom topography is adopted. Unlike the work \cite{huang2022high}, we do not need to form an elliptic equation for the system \eqref{SWe_MB4} in the diffusive scaling.
 
Without loss of generality, we take the 2D problem of \eqref{split-AD} as an example to describe our spatial approximations. 1D and higher dimensions can be discretized similarly. 
Denoting $U = (h,hu,hv)^T$, the fluxes in \eqref{split-AD} can be written as 
  \begin{align*}
    &F(U) = F_x(U) + F_y(U), \\
    &G(U) = G_x(U) + G_y(U),
  \end{align*}
with 
  \begin{align*}
    &F_x(U) = \left(-(hu)_x - \mu(\eps)\left(\sqrt{\frac{h^{\eta+1}}{k^2}} \frac{H_x}{\sqrt{H_x + H_y}}\right)_x, \,\, -(hu^2)_x,  \,\,-(huv)_x \right)^T, \\
    &F_y(U) = \left(-(hv)_y - \mu(\eps)\left(\sqrt{\frac{h^{\eta+1}}{k^2}} \frac{H_y}{\sqrt{H_x + H_y}}\right)_y, \,\,-(huv)_y,  \,\,-(hv^2)_y\right)^T,\\
    &G_x(U) = \left(\mu(\eps)\left(\sqrt{\frac{h^{\eta+1}}{k^2}} \frac{H_x}{\sqrt{H_x + H_y}} \right)_x, \,\,-\frac{1}{\eps^2}\left(\frac{g}{2} h^2\right)_x - \frac{1}{\eps^2}ghb_x -\frac{1}{\eps^2}\frac{gk^2\sqrt{(hu)^2 + (hv)^2}}{h^{\eta}}(hu), \,\,0 \right)^T,\\
    &G_y(U) = \left(\mu(\eps)\left(\sqrt{\frac{h^{\eta+1}}{k^2}} \frac{H_y}{\sqrt{H_x + H_y}} \right)_y, \,\,0,\,\, -\frac{1}{\eps^2}\left(\frac{g}{2} h^2\right)_y - \frac{1}{\eps^2}ghb_y -\frac{1}{\eps^2}\frac{gk^2\sqrt{(hu)^2 + (hv)^2}}{h^{\eta}}(hu) \right)^T.
  \end{align*}
For simplicity, we consider a rectangular domain $\Omega=[a,b]\times[c,d]$, with a uniform Cartesian partition. Let $\Dx=\frac{b-a}{N_x}$ and $\Dy=\frac{d-c}{N_y}$ be the mesh sizes along the $x$ and $y$ directions respectively, where $N_x$ and $N_y$ are the number of cells correspondingly.
The cell is denoted as $I_{ij}=[x_{i-\frac{1}{2}}, x_{i+\frac{1}{2}}]\times[y_{j-\frac{1}{2}}, y_{j+\frac{1}{2}}]$ with $i=1,2,\ldots,N_x$ and $j=1,2,\ldots,N_y$, where $a=x_{\frac{1}{2}}<x_{\frac{3}{2}}<\cdots<x_{N_x + \frac{1}{2}}=b$ and $c=y_{\frac{1}{2}}<y_{\frac{3}{2}}<\cdots<y_{N_y + \frac{1}{2}}=d$.
Now we set the grid point $(x_i,y_j)$ as $x_{i}=\frac{1}{2}(x_{i-\frac{1}{2}} + x_{i+\frac{1}{2}})$ and $y_j=\frac{1}{2}(y_{j-\frac{1}{2}} + y_{j+\frac{1}{2}})$, located at each cell center. We denote $\omega_{i,j}$ as the value at the point $(x_i,y_j)$, where $\omega$ is $h$ or $\bq$.
Correspondingly $\omega^\pm_{i+\frac{1}{2},j}$, $\omega^\pm_{i,j+\frac{1}{2}}$ represent cell interface values at $(x_{i+\frac12},y_j)$ and $(x_i,y_{j+\frac12})$ from the left and right limits, respectively. A high order finite difference WENO reconstruction will be used for those convective terms, which is done in a dimension-by-dimension way for the 2D problems:
  \begin{itemize}
    \item {\bf{WENO reconstruction with a Lax-Friedrichs flux splitting.}} \\
    Here we briefly review a finite difference WENO reconstruction with a Lax-Friedrichs flux splitting for the convective terms 
    $\nabla\cdot\bq$ and $\nabla\cdot\left(\frac{\bq\otimes\bq}{h}\right)$, we refer to \cite{jiang1996efficient,shu1998essentially} for more details. Taking $\nabla\cdot \bq$ as an example, it is in the form 
  \begin{equation}
    \nabla\cdot\bq = (hu)_x + (hv)_y,
  \end{equation}
  and we approximate them with a $k$th order finite difference reconstruction, namely
  \begin{subequations}
    \begin{align} 
      &(hu)_x\big|_{(x_i,y_j)} = \frac{1}{\Dx} \left\{\widehat{(hu)}_{i+\frac{1}{2},j} - \widehat{(hu)}_{i-\frac{1}{2},j}\right\} + \mathcal{O}(\Dx^k), \\
      &(hv)_y\big|_{(x_i,y_j)} = \frac{1}{\Dy} \left\{\widehat{(hv)}_{i,j+\frac{1}{2}} - \widehat{(hv)}_{i,j-\frac{1}{2}}\right\} + \mathcal{O}(\Dy^k).
    \end{align}
  \end{subequations}
  ${\widehat{(hu)}_{i+\frac{1}{2},j}}$ and $\widehat{(hv)}_{i,j+\frac{1}{2}}$ are obtained by a high order finite difference WENO reconstruction. 
  For example, ${\widehat{(hu)}_{i+\frac{1}{2},j}}$ is decomposed into the following two parts
  \begin{equation} 
    \label{WENO-Flux}
    \widehat{(hu)}_{i+\frac{1}{2},j} = \widehat{(hu)}_{i+\frac{1}{2},j}^{+} + \widehat{(hu)}_{i+\frac{1}{2},j}^{-}.
  \end{equation}
  $\widehat{(hu)}_{i+\frac{1}{2},j}^{+}$ and $\widehat{(hu)}_{i+\frac{1}{2},j}^{-}$ are reconstructed by using upwind and downwind information based on a Lax-Friedrichs flux splitting, with
  \begin{equation}
    \label{LF-Splitting-1}
    (hu)_{i,j}^{\pm} = \frac{1}{2}\left\{(hu)_{i,j} \pm \Lambda H\right\}, \quad \Lambda = \max\left\{|\bu| + \min\{1,\frac{1}{\eps^2}c\}\right\}.
  \end{equation}
  For example, for a fifth order WENO reconstruction, the upwind flux $\widehat{(hu)}_{i+\frac{1}{2},j}^{+}$ is obtained from $\{ (hu)_{i-2,j}^+, (hu)_{i-1,j}^+, (hu)_{i,j}^+,(hu)_{i+1,j}^+,(hu)_{i+2,j}^+ \}$, while the downwind flux $\widehat{(hu)}_{i+\frac{1}{2},j}^{-}$ is obtained based on $\{ (hu)_{i-1,j}^-, (hu)_{i,j}^-, (hu)_{i+1,j}^-,(hu)_{i+2,j}^-,(hu)_{i+3,j}^- \}$, respectively.
  Note that here we take the surface level $H$ in \eqref{LF-Splitting-1} for numerical viscosity, which in the hydrostatic state \eqref{Still-water} will not introduce any numerical dissipation, so that can help to preserve such an equilibrium steady state.
  $(hv)_{i,j}^{\pm}$ can be obtained similarly, and it is also similar for the flux difference approximation of $\nabla\cdot\left(\frac{\bq\otimes\bq}{h}\right)$. We denote such a WENO reconstruction as $\nabla_{LW}$.

  \item  {\bf{Well-balanced WENO reconstrction.}} \\
  If initially the water is at a hydrostatic state \eqref{Still-water}, such an equilibrium will be preserved for later times. Numerically it is also very important to keep such an equilibrium state. Many well-balanced methods have been proposed \cite{kurganov2002central,zhou2001surface,leveque1998balancing,xing2005high,X2017}. Here we follow \cite{xing2005high} to obtain a well-balanced WENO reconstruction. For the system, in order to preserve $\bq=\mathbf{0}$ when $H$ is a constant, it is important to preserve
\begin{equation*}
	\nabla \left(\frac{g}{2}h^2\right) = -gh\nabla b.
\end{equation*}
The main idea is to split the right side as
\begin{equation*}
	\nabla \left(\frac{g}{2}h^2\right) = -gh \nabla b=-g H\nabla b +\nabla\left(\frac{g}{2}b^2\right),
\end{equation*}
and apply the same WENO reconstruction (with exactly the same nonlinear weights) of $\nabla \left(\frac{g}{2}h^2\right)$ to $\nabla b$ and $\nabla\left(\frac{g}{2}b^2\right)$, so that
\begin{equation*}
	\nabla \left(\frac{g}{2}h^2\right) + g H\nabla b - \nabla\left(\frac{g}{2}b^2\right) = \nabla\left(\frac{g}{2} H^2 \right)
\end{equation*}
also holds at the discrete level, when $H$ is a constant.
Such a WENO reconstruction is denoted as $\nabla_{W}^{WB}$.
  
  \item {\bf{Compact central difference discretization for diffusive terms}} \\
  For the diffusive terms in \eqref{split-AD}, a compact central difference discretization will be applied \cite{boscarino2019high,boscarino2022high,huang2022high}. For example, 
  a fourth order central difference for $(a(x,y)H_x)_x$ at  $(x_i,y_j)$  takes the following form

\begin{equation}
	\label{CD4}
	\begin{aligned}
		(a(x,y)H_x)_x\Big|_{(x_i, y_j)}
		=\,&\frac{1}{\Delta x^2}
		{\bf a}_{i,j}
		\begin{pmatrix}
			-25/144 & 1/3 & -1/4 & 1/9 & -1/48\\
			1/6   & 5/9 &  -1  & 1/3 & -1/18\\
			0    &  0  &   0  &  0  &   0  \\
			-1/18  & 1/3 &  -1  & 5/9 &  1/6 \\
			-1/48  & 1/9 & -1/4 & 1/3 &-25/144
		\end{pmatrix}
		{\bf H}^T_{i,j} \\[3mm]
		&+\mathcal{O} (\Delta x^4),
	\end{aligned}
\end{equation}
with the two vectors being
\begin{align}
	{\bf a}_{i,j}&=(a_{i-2,j},a_{i-1,j},a_{i,j},a_{i+1,j},a_{i+2,j}), \notag\\[3mm]
	{\bf H}_{i,j}&=(H_{i-2,j},H_{i-1,j},H_{i,j},H_{i+1,j},H_{i+2,j}), \notag
\end{align}
similarly for $(a(x,y)H_y)_y$ along the $y$ direction. The function $a(x,y)$ represents the coefficient $\sqrt{\frac{h^{\eta +1}}{k^2}}\frac{1}{\sqrt{|\nabla H|}}$. For the gradient $|\nabla H|$ in the coefficient of the diffusive terms, as no upwind information is available, so a WENO reconstruction without numerical viscosity (namely $\Lambda=0$ as in \eqref{LF-Splitting-1}) is used, which we denote as $|\nabla_W H|$.
We note that as we observed numerically a WENO reconstruction of $|\nabla H|$ is important to control numerical oscillations, especially when the solution is not smooth.
   
\end{itemize} 

\subsection{\bf{Algorithm flowchart}}
Now we will summarize our proposed scheme, with high order finite difference WENO recontructions and compact central difference discretizations, coupled with high order IMEX RK schemes, which is AP and well-balanced for the SWEs with a nonflat bottom topology and Manning friction \eqref{SWe_MB4}.
The algorithm flowchart is as follows.
\begin{enumerate}
    \item  Start from the time level $t^n$, with $U^n$, we first compute $U^{(i)}_{\star}$ from (\ref{ustar}):
    \begin{subequations}
      \begin{align}
      \label{U_sta_IMEX3_E1}
      h_{\star}^{(i)} = \, & h^n - \Delta t\sum_{j=1}^{i-1}\tilde{a}_{ij}\left\{\nabla_{LW} \cdot \bq^{(j)}  + \mu(\eps)\nabla\cdot\left( \sqrt{\frac{h^{\eta+1}}{k^2}}\frac{\nabla H}{\sqrt{|\nabla_{W} H|}}\right)^{(j)}
      \right\} \\
      &+ \Delta t\sum_{j=1}^{i-1}a_{ij} \mu(\eps)\nabla \cdot\left( \sqrt{\frac{h^{\eta+1}}{k^2}}\frac{\nabla H}{\sqrt{|\nabla_{W} H|}}\right)^{(j)}, \notag
      \\
      \bq^{(i)}_{\star} = \, & \bq^n  - \Delta t\sum_{j=1}^{i-1}\tilde{a}_{ij}\nabla_{LW} \cdot \left(\frac{\bq\otimes \bq}{h}\right)^{(j)} - \Delta t\sum_{j=1}^{i-1}a_{ij}\left\{\frac{1}{\eps^2}\left[\nabla_{W}^{WB}\left(\frac{g}{2}h^2\right)  \right. \right. \\
      &\left. \left. + g(h+b)\nabla_W^{WB} b - \nabla_{W}^{WB}\left(\frac{g}{2}b^2\right)\right]+ \frac{1}{\eps^2}\frac{gk^2|\bq|}{h^{\eta}}\bq\right\}^{(j)}. \notag
      \end{align}
      \end{subequations}
      \item Then we solve $U_I^{(i)}$ from \eqref{ui}:
      \begin{enumerate}
        \item $U_I^{(i)}$ can be expressed as
      \begin{subequations}
        \label{ui_space}
        \begin{align}
          h^{(i)} = \,&h_{\star}^{(i)} + a_{ii}\mu(\eps)\Dt\nabla\cdot\left(\sqrt{\frac{h^{\eta + 1}}{k^2}} \frac{\nabla H}{\sqrt{|\nabla_W H|}}\right)^{(i)},\label{ui_space-1}\\
          \bq^{(i)} = \, &\bq_{\star}^{(i)}  - \Dt a_{ii}\left\{\frac{1}{\eps^2}\left[\nabla_{W}^{WB}\left(\frac{g}{2}h^2\right) + g(h+b)\nabla_W^{WB} b  \right. \right. \\ 
           & \left.\left.  - \nabla_{W}^{WB}\left(\frac{g}{2}b^2\right)\right]+ \frac{1}{\eps^2}\frac{gk^2|\bq|}{(h^{\eta})}\bq\right\}^{(i)}. \notag
        \end{align}
      \end{subequations}
      \item For the system \eqref{ui_space},  we first obtain $h^{(i)}$ from \eqref{ui_space-1} by using a Picard iteration as the first order IMEX scheme, that is, we set $h^{(i),0} = h^{(i-1)}$ and compute $|\nabla_W H^{(i),0}|$ with a WENO reconstruction $\nabla_W$. If it is less than $\delta$, we set $h^{(i)}=h^{(i),0}$ and stop the iteration, otherwise, we continue the iteration by 
      \begin{equation}
        \label{hi_picard}
        h^{(i),\ell+1} = h_{\star}^{(i)} - a_{ii}\mu(\eps)\Dt\nabla\cdot\left\{\left(\frac{\sqrt{\frac{h^{\eta +1}}{k^2}}}{\sqrt{|\nabla_W H|}}\right)^{(i),\ell}\nabla H^{(i),\ell+1}\right\},\quad \ell \ge 0.
      \end{equation}
      The elliptic operator $\nabla\cdot\left\{\left(\frac{\sqrt{\frac{h^{\eta +1}}{k^2}}}{\sqrt{|\nabla_W H|}}\right)^{(i),\ell}\nabla H^{(i),\ell+1}\right\}$ in \eqref{hi_picard} is discreted by a high order central difference discretization as described in \eqref{CD4}.
      The iteration is stoped when $||h^{(i),\ell+1} - h^{(i),\ell}|| \le \delta$.
      
      \item After obtaining $h^{(i)}$, we update $\bq^{(i)}$ from
      \begin{equation}
        \bq^{(i)} = \frac{2\eps^2\bq_{\star\star}^{(i)}}{\eps^2+\sqrt{\eps^4+ \ds\frac{4a_{ii}\Dt gk^2}{(h^{(i)})^{\eta}}|\eps^2\bq_{\star\star}^{(i)}|}},
      \end{equation}
      with 
      \begin{equation}
        \eps^2 \bq_{\star\star}^{(i)} = \eps^2 \bq_{\star}^{(i)}  - a_{ii}\Dt \left\{ \nabla_{W}^{WB}\left(\frac{g}{2}h^2\right)  + g(h+b)\nabla_{W}^{WB} b - \nabla_{W}^{WB}\left(\frac{g}{2}b^2\right) \right\}^{(i)}.
      \end{equation}
      Similarly, when $|\eps^2 \bq_{\star\star}^{(i)}|$ and $|\bq^{(i-1)}|$ both are less than $\delta$, we set $\bq^{(i)} = \mathbf{0}$. 
    \end{enumerate}
    \item Finally, we update the numerical solution $U^{n+1} = U^{(s)}$ with  the assumption of IMEX RK schemes being GSA.
  \end{enumerate}

\section{Asymptotic preserving (AP) and Asymptotically Accurate (AA)}
\label{sec4}
\setcounter{equation}{0}
\setcounter{figure}{0}

In this section, we will formally show that the first order IMEX scheme \eqref{SWE_s3} is AP, and the high order IMEX RK scheme \eqref{ustar}-\eqref{AD_high_final} is AA.
For both properties, we only consider time discretizations, while keeping space continuous. In accordance with \eqref{Exp1}, we adopt the following Chapman-Enskog expansions:
\begin{equation}
  \label{hqn}
  h^n(\bx) = h^n_0(\bx) + \mathcal{O}(\eps), \qquad
  \bq^n(\bx) = \bq^n_0(\bx) + \mathcal{O}(\eps),
\end{equation}
with $H^n_0(\bx) = h^n_0(\bx) + b(\bx)$, and
\begin{equation}
	\label{hq_relation}
	\bq^n_0 = - \left(\sqrt{\frac{h_0^{\eta+1}}{k^2}}\frac{\nabla H_0}{\sqrt{|\nabla H_0|}}\right)^{n}.
\end{equation}
Here $h^n(\bx) = h(\bx,t^n), \, \bq^n(\bx) = \bq(\bx,t^n)$, and we say that the initial conditions are {\em well-prepared} if the condition \eqref{hqn} and \eqref{hq_relation} are satisfied at $n=0$.

\subsection{AP property.}
For the first order IMEX scheme \eqref{SWE_s3}, we have
\begin{thm}
As $\eps \rightarrow 0$ the first order IMEX scheme \eqref{SWE_s3} is asymptotic preserving as long as the initial conditions are well-prepared.
\end{thm}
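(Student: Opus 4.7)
The plan is to insert the Chapman--Enskog expansions (\ref{hqn})--(\ref{hq_relation}) into the scheme (\ref{SWE_s3}), pass formally to the limit $\varepsilon\to 0$, and verify that what remains is a consistent backward Euler discretization of the limiting $p$-Laplacian equation (\ref{lim_E4-1}). Since the weight $\mu(\varepsilon)=e^{-\varepsilon^2/\Delta x}$ satisfies $\mu(0)=1$, the added and subtracted diffusive fluxes both survive in the limit, and the argument rests on two facts: a cancellation triggered by the well-prepared momentum constraint at level $n$, and the propagation of that same constraint to level $n+1$ by the momentum equation.

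First I would analyze the momentum equation (\ref{SWE_s3-2}). Multiplying through by $\varepsilon^2$, the time-derivative and convective contributions are $\mathcal{O}(\varepsilon^2)$ under (\ref{hqn}), and taking $\varepsilon\to 0$ leaves the algebraic equilibrium
\[ \nabla\!\left(\tfrac{g}{2}h_0^2\right)^{n+1} + g h_0^{n+1}\nabla b + \gamma_0^{n+1}\bq_0^{n+1} = \mathbf{0}. \]
Following the same manipulation as (\ref{eq2.4})--(\ref{lim_E3}), this forces
\[ \bq_0^{n+1} = -\sqrt{\tfrac{(h_0^{n+1})^{\eta+1}}{k^2}}\,\frac{\nabla H_0^{n+1}}{\sqrt{|\nabla H_0^{n+1}|}}, \]
so the Chapman--Enskog constraint (\ref{hq_relation}) is invariant under one step of the scheme. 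A supporting check is that the explicit update formula (\ref{qn1}) is itself consistent with this limit: using (\ref{qstar}), $\varepsilon^2\bq_\star \to -\Delta t\,(\nabla(gh_0^2/2)+gh_0\nabla b)^{n+1}$ is $\mathcal{O}(1)$, and expanding the denominator of (\ref{qn1}) around $\varepsilon=0$ yields $|\bq^{n+1}|\to \sqrt{(h_0^{n+1})^{\eta+1}|\nabla H_0^{n+1}|/k^2}$, matching the constraint and resolving the apparent $0/0$ indeterminate form.

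Next I would take the limit in the continuity equation (\ref{SWE_s3-1}). With $\mu(\varepsilon)\to 1$ it becomes
\[ \frac{h_0^{n+1}-h_0^n}{\Delta t} + \nabla\cdot\bq_0^n + \nabla\cdot\!\left(\sqrt{\tfrac{h_0^{\eta+1}}{k^2}}\tfrac{\nabla H_0}{\sqrt{|\nabla H_0|}}\right)^{\!n} = \nabla\cdot\!\left(\sqrt{\tfrac{h_0^{\eta+1}}{k^2}}\tfrac{\nabla H_0}{\sqrt{|\nabla H_0|}}\right)^{\!n+1}. \]
By the well-prepared hypothesis (\ref{hq_relation}) at level $n$, the two explicit flux-divergence terms on the left cancel exactly, and what remains is precisely the backward Euler discretization of the nonlinear limit (\ref{lim_E4-1}). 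This is the AP statement: the scheme is consistent with the limiting equation and, being implicit in the diffusive term, is free of the usual parabolic time-step restriction $\Delta t=\mathcal{O}(\Delta x^2)$.

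The main subtlety is handling the two degenerate limits carefully: the $0/0$ cancellation in the algebraic momentum update (\ref{qn1}), and the identification of exactly which terms in the explicit--implicit splitting must cancel to reveal the correct limit. Both are controlled by the combination $\mu(0)=1$, the well-prepared initial data, and the propagation of the momentum constraint just established; apart from those structural observations the argument reduces to direct substitution of the expansions (\ref{hqn}) and collection of leading-order terms.
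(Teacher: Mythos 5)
Your proposal is correct and follows essentially the same route as the paper's proof: pass to the limit in the momentum equation to recover the algebraic equilibrium and hence the constraint $\bq_0^{n+1}=-\bigl(\sqrt{h_0^{\eta+1}/k^2}\,\nabla H_0/\sqrt{|\nabla H_0|}\bigr)^{n+1}$, use the well-prepared constraint at level $n$ (propagated by induction) to cancel the explicit convective and added diffusive fluxes in the continuity equation since $\mu(0)=1$, and conclude with the backward Euler discretization of the limiting equation. Your extra consistency check on the update formula \eqref{qn1} is a nice supplementary observation but not part of the paper's argument.
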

\begin{proof}
  Such a proof is straightforward by using the mathematical induction. 
  As $\eps$ goes to zero, the system \eqref{SWE_s3} becomes
  \begin{subequations}
    \label{AP_E1}
    \begin{align}
      & \frac{h_0^{n+1} - h_0^{n}}{\Dt} + \nabla \cdot \bq^n_0 + \mu(0)\nabla \cdot \left(\sqrt{\frac{h_0^{\eta+1}}{k^2}}\frac{\nabla H_0}{\sqrt{|\nabla H_0|}}\right)^n = \mu(0)\nabla \cdot \left(\sqrt{\frac{h_0^{\eta+1}}{k^2}}\frac{\nabla H_0}{\sqrt{|\nabla H_0|}}\right)^{n+1}, \label{AP_E1-1} \\
      & \nabla \left(\frac{g}{2}h_0^2\right)^{n+1} = -gh_0^{n+1}\nabla b - \frac{gk^2|\bq_0^{n+1}|}{h_0^{\eta}}\bq_0^{n+1}. \label{AP_E1-2}
    \end{align}
  \end{subequations}
  From \eqref{AP_E1-2}, following \eqref{eq2.4}-\eqref{lim_E3}, we obtain 
  \begin{equation}
  	\label{qn+1}
    \bq_0^{n+1} = -\left(\sqrt{\frac{h_0^{\eta +1}}{k^2}} \frac{\nabla H_0}{\sqrt{|\nabla H_0|}}\right)^{n+1}.
  \end{equation}
  Substituting the relation of \eqref{qn+1} at time level $t^n$ into \eqref{AP_E1-1}, since $\mu(0) = 1$, as $\varepsilon \to 0$ we get
  \begin{equation}
  	\label{hn+1}
    \frac{h_0^{n+1} - h_0^{n}}{\Dt} = \nabla \cdot \left(\sqrt{\frac{h_0^{\eta+1}}{k^2}}\frac{\nabla H_0}{\sqrt{|\nabla H_0|}}\right)^{n+1},
  \end{equation}
 which is a consistent implicit time discretization to the limiting equation \eqref{lim_E4-1}. Starting from well-prepared initial conditions \eqref{hqn} at $n=0$, by the mathematical induction, we can see the AP property holds for all later time $t^n$. 
\end{proof}

\subsection{AA property.}
Similarly as the AP property for the first order IMEX scheme \eqref{SWE_s3}, here we focus on the AA analysis for high order IMEX RK schemes \eqref{ustar}-\eqref{AD_high_final}. In what follows,  we recognize that the GSA condition is crucial to guarantee the AA property.
\begin{thm}
Consider applying an IMEX RK scheme  \eqref{ustar}-\eqref{AD_high_final} of order $p$. Assume that the IMEX RK method is GSA and the the initial conditions are well-prepared. Let us denote by $\bU^1(\bx;\eps)=(h^1(\bx;\eps),\bq^1(\bx;\eps))^T$ the numerical solution after one time step and  $\bU^{exa}(\bx,t)=(h^{exa}(\bx,t),\bq^{exa}(\bx,t))^T$ the exact solution of the limiting equation \eqref{lim_E4-1}. Then after one step, the error between the numerical and exact solution satisfies the following estimate
  \begin{equation}
    \label{esti_error}
    \lim_{\eps\to 0} \,\bU^1(\bx;\eps) = \bU^{exc}(\bx,\Dt) + \mathcal{O}(\Dt^{p+1}),
  \end{equation}
  namely, the scheme is AA.
\end{thm}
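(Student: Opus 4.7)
The plan is to mimic the AP argument stage-by-stage and then invoke the GSA hypothesis to identify the one-step map, in the limit $\eps\to 0$, with that of an $s$-stage diagonally implicit Runge--Kutta method of order $p$ applied directly to the limit equation \eqref{lim_E4-1}. First I would posit a Chapman--Enskog expansion at each intermediate stage,
\[
h^{(i)} = h_0^{(i)} + \eps\, h_1^{(i)} + \mathcal{O}(\eps^2), \qquad
\bq^{(i)} = \bq_0^{(i)} + \eps\, \bq_1^{(i)} + \mathcal{O}(\eps^2), \qquad i=1,\dots,s,
\]
insert these into the stage equations \eqref{ustarc}--\eqref{uic}, and proceed inductively on $i$. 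The well-prepared hypothesis \eqref{hqn}--\eqref{hq_relation} serves as the base case at $n=0$.

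The key leading-order identity is obtained by passing to the limit $\eps\to 0$ in the momentum stage \eqref{uic_2}: the $\mathcal{O}(\eps^{-2})$ terms force, exactly as in the derivation of \eqref{qn+1}, the algebraic constraint
\[
\bq_0^{(i)} = -\left(\sqrt{\frac{h_0^{\eta+1}}{k^2}}\,\frac{\nabla H_0}{\sqrt{|\nabla H_0|}}\right)^{(i)} \qquad \text{for every } i.
\]
Substituting this into the combined height stage (\eqref{ustarc} plus \eqref{uic_1}), invoking $\mu(0)=1$ together with the identity $\nabla\cdot\bq_0^{(j)}=-\nabla\cdot\bigl(\sqrt{h_0^{\eta+1}/k^2}\,\nabla H_0/\sqrt{|\nabla H_0|}\bigr)^{(j)}$, the explicit convective contribution cancels exactly against the explicit penalty contribution because both appear with the common coefficient $\tilde a_{ij}$. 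What remains is
\[
h_0^{(i)} = h_0^n + \Dt\sum_{j=1}^{i} a_{ij}\,\nabla\cdot\left(\sqrt{\frac{h_0^{\eta+1}}{k^2}}\,\frac{\nabla H_0}{\sqrt{|\nabla H_0|}}\right)^{(j)},
\]
which is precisely the $i$-th stage of the implicit tableau $(A,\mathbf{b},\mathbf{c})$ applied to the limiting equation \eqref{lim_E4-1}.

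Next, I would invoke the GSA condition $\mathbf{b}^T=\be_s^T A$ (and the corresponding one for the explicit tableau) so that the one-step update collapses onto the last stage, $U^{n+1}=U^{(s)}$. Consequently, as $\eps\to 0$ the one-step map of our IMEX scheme agrees with that of a GSA DIRK method of classical order $p$ applied to \eqref{lim_E4-1}. The local truncation error estimate \eqref{esti_error} then follows from the standard RK consistency analysis, since the exact solution of \eqref{lim_E4-1} satisfies the same stage identities up to a local error of $\mathcal{O}(\Dt^{p+1})$.

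The main obstacle I anticipate is a rigorous justification of the stage-by-stage cancellation. It hinges on (i) the property $\mu(0)=1$, (ii) the use of the same explicit coefficient $\tilde a_{ij}$ for both the convective term and the penalty term in \eqref{ustarc}, and (iii) the validity of the leading-order algebraic constraint for $\bq_0^{(j)}$ at every prior stage $j<i$, which must be verified inductively in tandem with the height relation. A secondary technicality is that $h^{(i)}$ is only defined implicitly through the Picard iteration \eqref{hi_picard}: one must argue that for $\eps$ sufficiently small this iteration converges to the true solution of \eqref{uic_1}, so that the formal stage relations used above are indeed realized at the fully discrete level and do not degrade the effective temporal order below $p$.
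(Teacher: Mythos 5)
Your proposal is correct and follows essentially the same route as the paper's proof: an induction over the stages, using the $\mathcal{O}(\eps^{-2})$ momentum balance to obtain the algebraic constraint on $\bq_0^{(i)}$, cancelling the explicit convective and penalty terms via $\mu(0)=1$, and invoking GSA to identify the limit scheme with the DIRK tableau $A$ applied to \eqref{lim_E4-1}. Your closing remarks on the Picard iteration and the rigor of the stage-by-stage cancellation go slightly beyond the paper's formal argument but do not change the approach.
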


\begin{proof}
  We prove it by using the mathematical induction for time stages. If we assume \eqref{hqn} and \eqref{hq_relation} hold at $t^n$, from the AP property of the first order IMEX scheme \eqref{SWE_s3}, we can get \eqref{qn+1} and \eqref{hn+1} hold for a time step $a_{11}\Delta t$ at the first stage. Then if we assume this formal convergence holds for the $(i-1)$-th stage, we will prove it also holds for the $i$-th stage. We have:
  \begin{enumerate}
    \item  for $U_{\star}^{(i)}$ in \eqref{ustarc}, as $\eps\rightarrow 0$, it becomes
    \begin{align*}
          h_{\star}^{(i)} = \,&  h_0^n - \Delta t\sum_{j=1}^{i-1}\tilde{a}_{ij}\left\{\nabla \cdot \bq_0^{(j)}  + \mu(0)\nabla\cdot\left( \sqrt{\frac{h_0^{\eta+1}}{k^2}}\frac{\nabla H_0}{\sqrt{|\nabla H_0|}}\right)^{(j)}
          \right\}, \\
          & + \Delta t\sum_{j=1}^{i-1}a_{ij} \mu(0)\nabla\cdot\left( \sqrt{\frac{h_0^{\eta+1}}{k^2}}\frac{\nabla H_0}{\sqrt{|\nabla H_0|}}\right)^{(j)}, \\
          \bq^{(i)}_{\star} = \,&
           \bq_0^n  - \Delta t\sum_{j=1}^{i-1}\tilde{a}_{ij}\nabla \cdot \left(\frac{\bq_0\otimes \bq_0}{h_0}\right)^{(j)}.
    \end{align*}
    Since 
   \begin{equation*}
 	\label{qj}
 	\bq_0^{(j)} = -\left(\sqrt{\frac{h_0^{\eta +1}}{k^2}} \frac{\nabla H_0}{\sqrt{|\nabla H_0|}}\right)^{(j)}, \quad 0\le j \le i-1,
   \end{equation*}
   and $\mu(0) = 1$ as $\varepsilon \to 0$,
   $U^{(i)}_{\star}$ becomes
    \begin{align*}
	h_{\star}^{(i)} = \,&  h_0^n + \Delta t\sum_{j=1}^{i-1}a_{ij} \nabla\cdot\left( \sqrt{\frac{h_0^{\eta+1}}{k^2}}\frac{\nabla H_0}{\sqrt{|\nabla H_0|}}\right)^{(j)}, \\
	\bq^{(i)}_{\star} = \,&
	\bq_0^n  - \Delta t\sum_{j=1}^{i-1}\tilde{a}_{ij}\nabla \cdot \left(\frac{\bq_0\otimes \bq_0}{h_0}\right)^{(j)}.
    \end{align*}   
    \item for $U^{(i)}$ in \eqref{uic}, as $\eps\rightarrow 0$, first from \eqref{uic_2} we get
      \begin{equation*}
        \nabla\left(\frac{g}{2}(h_0^{(i)})^2\right)  + gh_0^{(i)}\nabla b + \gamma^{(i)}_0\bq_0^{(i)} = 0,
      \end{equation*}
          with $\gamma^{(i)}_0 = \dfrac{gk^2|\bq_0^{(i)}|}{(h_0^{\eta})^{(i)}}$, so that 
          \begin{equation*}
            \bq_0^{(i)} = - \left(\sqrt{\frac{h_0^{\eta +1}}{k^2}}\frac{\nabla H_0}{\sqrt{|\nabla H_0|}} \right)^{(i)},
          \end{equation*}
          and \eqref{uic_1} becomes 
          \begin{equation*}
              h_0^{(i)} = h_{\star}^{(i)} + \Dt a_{ii}\nabla \cdot\left(\sqrt{\frac{h_0^{\eta+1}}{k^2}}\frac{\nabla H_0}{\sqrt{|\nabla H_0|}}\right)^{(i)}=h_0^n + \Dt \sum_{j=1}^{i} a_{ij}\nabla \cdot\left(\sqrt{\frac{h_0^{\eta+1}}{k^2}}\frac{\nabla H_0}{\sqrt{|\nabla H_0|}}\right)^{(j)}.
          \end{equation*}
          Therefore, the formal convergence also holds for the stage $i$.
  \end{enumerate}
With the inner stage $i$ up to $s$, and given that the IMEX RK scheme is GSA, we update the solution $U^{n+1}$ as  $U^{n+1} = U^{(s)}$. Therefore, as $\eps\rightarrow 0$, the scheme becomes a $p$th order implicit scheme for the limiting equation \eqref{lim_E4-1} using the matrix $A$ in \eqref{DBT2}. This means that the error after one time step satisfies \eqref{esti_error}, confirming that the scheme is AA.
\end{proof}

\section{Numerical tests}
\label{sec5}
\setcounter{equation}{0}
\setcounter{figure}{0}
\setcounter{table}{0}

In this section, we will perform some numerical experiments for the SWEs with Manning friction and a non-flat bottom topology, with our proposed scheme. A third order GSA IMEX RK scheme from \cite{ascher1997implicit} is adopted, with the double Butcher {\it tableau} given by
\begin{align*} \label{IMEX1_(5,5,3)}
&\begin{array}{c|ccccc|ccccc}
0 & 0 & 0 & 0 & 0 & 0 &0&  0 & 0  & 0 & 0\\[3mm]
\frac{1}{2} & \frac{1}{2} & 0 & 0 & 0 & 0& 0&  \frac{1}{2} & 0 & 0& 0\\[3mm]
\frac{2}{3} & \frac{11}{18}&\frac{1}{18} & 0 & 0&0& 0&  \frac{1}{6} & \frac{1}{2} & 0&0\\[3mm]
\frac{1}{2} &   \frac{5}{6} & -\frac{5}{6} &  \frac{1}{2}& 0&0&0 & -\frac{1}{2}& \frac{1}{2} & \frac{1}{2}&0\\[3mm]
1 & \frac{1}{4}&\frac{7}{4} & \frac{3}{4} & -\frac{7}{4}&0&0 & \frac{3}{2}& -\frac{3}{2} & \frac{1}{2}&\frac{1}{2}\\[3mm]
\hline
  & \frac{1}{4}&\frac{7}{4} & \frac{3}{4} & -\frac{7}{4}&0 &0 & \frac{3}{2}& -\frac{3}{2} & \frac{1}{2}&\frac{1}{2}
\end{array}.
\end{align*}
In space, a fifth order finite difference WENO scheme \cite{jiang1996efficient,shu1998essentially}, and a fourth order compact central difference discretization, introduced in section \ref{Dspace}, are used to reconstruct the first and second order spatial derivatives, respectively. Overall the resulting scheme is third order in time and fourth order in space, and denoted as ``T3S4''. For comparison, we also consider a first order scheme,
with a first order IMEX scheme \eqref{SWE_s3} in time and a first order 
numerical flux for \eqref{WENO-Flux}, which are 
\begin{equation}
  \widehat{(hu)}^{+}_{i+\frac{1}{2},j} = (hu)^{+}_{i,j}, \quad 
  \widehat{(hu)}^{-}_{i+\frac{1}{2},j} = (hu)^{-}_{i+1.j},
\end{equation}
while a second order central difference scheme \cite{boscarino2019high} is used for second order derivatives. The scheme is denoted as ``T1S1''.
Here the time step is taken to be $\Delta t = \text{CFL}\,\Delta x/\Lambda$ with  $\Lambda = \max \{|\bu|+\min(1,1/\eps)\sqrt{gh}\}$
and $\text{CFL}=0.2$. Numerical results will validate that our proposed scheme is high order in space and in time, AP and AA in the diffusive limit, as well as well-balancing for capturing propagation of small perturbations around a hydrostatic state.

\begin{exa}{
\em
\label{exam57}({\bf{1D accuracy test with linear friction}})
In this example, we start to test the order of convergence for the T3S4 scheme, with different values of $\eps$ in the one-dimensional case. For simplicity, we consider the system \eqref{SWe_MB4} with a linear friction term where $\gamma=1$ is taken. We consider a well-prepared initial condition, in order to show both AP and AA properties, which is given as follows:
\begin{equation}
h(x,0) = \sin(\pi x) + 2,  \qquad q(x,0) = -2\pi\cos(\pi x)\left(\sin(\pi x) + 2 \right),
\end{equation}
with a flat bottom $b(x) = 0$. The computational domain is $[0,2]$ with a periodic boundary condition, and $g=2$.
We take three different values of $\eps$: $\eps=1,10^{-2},10^{-6}$.
Setting the final time $T=0.02$, we compute the numerical errors and order of convergence, by comparing to a reference solution with $N=2560$.
The numerical errors and orders are shown in Table~ \ref{T_exam57_1}.
From the results, we can see that when $\eps=1$ and $\eps =10^{-6}$, with mesh refinement, a 3rd order accuracy has been obtained, namely the temporal accuracy dominates. For the intermediate regime $\eps=10^{-2}$, order reductions can be observed, which is a typical case occurred in such high order IMEX methods \cite{boscarino2019high,boscarino2022high,huang2022high}.
From this table, we verify that the AP and AA properties are obtained.

\begin{table}[htbp]
  \caption{ Example \ref{exam57}. $L^1$ errors and orders for $h$ and $hu$ with different $\eps$'s. $T=0.02$. }
  \begin{center}
		\begin{tabular}{c|c|c|c|c|c|c|c}
			\hline\hline	
		\multicolumn{1}{c}{\multirow{2}*{}}&\multicolumn{1}{|c|}{\multirow{2}*{N}}&\multicolumn{2}{c|}{ $1$}&\multicolumn{2}{c|}{$10^{-2}$}&\multicolumn{2}{c}{$10^{-6}$}\\
			\cline{3-8}
\multicolumn{1}{c}{} &\multicolumn{1}{|c|}{} &error& order& error&order&error&order\\  \hline\hline
\multicolumn{1}{c|}{\multirow{6}*{$h$}}
    &40 &     9.30E-04 &       --&     4.88E-06 &      -- &     6.39E-06 &       --\\ \cline{2-8}
    &80 &     7.26E-05 &     3.68&     2.73E-06 &     0.84&     5.35E-07 &     3.58\\ \cline{2-8}
   &160 &     2.18E-06 &     5.06&     9.20E-07 &     1.57&     5.89E-08 &     3.18\\ \cline{2-8}
   &320 &     1.10E-07 &     4.31&     2.39E-07 &     1.94&     8.34E-09 &     2.82\\ \cline{2-8}
   &640 &     7.02E-09 &     3.97&     4.92E-08 &     2.28&     1.11E-09 &     2.91\\ \cline{2-8}
   &1280&     5.62E-10 &     3.64&     7.34E-09 &     2.75&     1.26E-10 &     3.14\\ \hline\hline
\multicolumn{1}{c|}{\multirow{6}*{$hu$}}
    &40 &     6.35E-03 &       --&     1.84E-04 &       --&     1.02E-04 &     --\\ \cline{2-8}
    &80 &     4.64E-04 &     3.77&     5.24E-05 &     1.81&     8.49E-06 &     3.58\\ \cline{2-8}
   &160 &     1.39E-05 &     5.06&     1.48E-05 &     1.83&     1.36E-06 &     2.65\\ \cline{2-8}
   &320 &     6.78E-07 &     4.36&     4.87E-06 &     1.60&     2.02E-07 &     2.75\\ \cline{2-8}
   &640 &     4.44E-08 &     3.93&     1.20E-06 &     2.02&     2.70E-08 &     2.90\\ \cline{2-8}
   &1280&     3.65E-09 &     3.61&     2.01E-07 &     2.57&     3.11E-09 &     3.11\\ \hline\hline
\end{tabular}
\end{center}
\label{T_exam57_1}
\end{table}

}
\end{exa}

\begin{exa}
  {\em
  \label{exam12}
  ({\bf 1D accuracy test with nonlinear friction}) 
  In this example, we will test the order of convergence for our scheme with  nonlinear friction. Following \cite{yang2021high}, we construct an exact solution as follows: 
  \begin{equation}
    h(x,t)=2 + \eps^2\sin(\pi (x-t)),\qquad q(x,t) =2 + \eps^2\sin(\pi (x-t)).
    \label{exam12_F1}
  \end{equation}
  This solution satisfies the SWEs with extra source terms
  \begin{equation}
  	\label{exam12_eq}
  \left\{
  \begin{array}{ll}
  h_t + q_x = 0,\\ [3mm]
 \ds q_t +\left(\frac{q^2}{h}\right)_x + \frac{1}{\eps^2}\left(\frac{g}{2}h^2\right)_x = -\frac{1}{\eps^2}gk^2\frac{|q|q}{h^{7/3}} + \frac{1}{\eps^2}g k^2\left[ 2 + \eps^2\sin(\pi(x-t)) \right]^{-1/3} \\[3mm] \hspace{4.4cm} +  g \pi h\cos( \pi(x-t)),
  \end{array}
  \right.
  \end{equation}
  Here, a flat bottom $b(x) = 0$ is considerred and the Manning coefficient is $k=1$, and $g=1$. The computational domain is $\Omega =[0,2]$.
  For this example, due to those extra source terms in \eqref{exam12_eq}, we are not able to take $\eps$ approaching $0$. Instead we
  set $\eps=1$ and take a final time $T=0.04$. We compute the numerical errors and orders by comparing to the exact solutions, and show the results in Table~\ref{T_exam12_1}.
  From the table, we observe almost fifth-order accuracy. This might be due to that spatial errors of convective terms are dominated in this hyperbolic regime. 
  
\begin{table}[htbp]
  \caption{ Example \ref{exam12}. $L^1$ errors and orders for $h$ and $hu$ with $\eps=1$. $T=0.04$.}
  \begin{center}
    \begin{tabular}{c|c|c|c|c}
      \hline\hline	
      \multicolumn{1}{c|}{\multirow{2}*{N}}&\multicolumn{2}{c|}{ $h$}&\multicolumn{2}{c}{$hu$}\\
      \cline{2-5}
\multicolumn{1}{c|}{} &error& order& error&order\\  \hline\hline
    10 &     2.37E-03 &       --&     5.23E-03 &       --\\ \cline{1-5}
    20 &     8.11E-05 &     4.87&     1.51E-04 &     5.11\\ \cline{1-5}
    40 &     2.46E-06 &     5.04&     3.04E-06 &     5.64\\ \cline{1-5}
    80 &     7.59E-08 &     5.02&     7.89E-08 &     5.27\\ \cline{1-5}
   160 &     2.33E-09 &     5.02&     2.05E-09 &     5.27\\ \cline{1-5}
   320 &     6.93E-11 &     5.07&     4.13E-11 &     5.63\\ \cline{1-5}
   640 &     1.96E-12 &     5.14&     4.72E-12 &     3.13\\ \cline{1-5}
   1280&     1.71E-13 &     3.52&     7.68E-13 &     2.62\\\hline\hline
\end{tabular}
\end{center}
\label{T_exam12_1}
\end{table}
  
  }\end{exa}

  \begin{exa}{
    \em
    \label{exam55}
    In this example, we will verify the AP property of our scheme. We consider two initial conditions: smooth and discontinuous, as in \cite{bulteau2020fully}. The spatial domain for both cases is $[-5,5]$.
    The smooth initial condition is given by
    \begin{equation}
    h(x,0) = \left\{
                \begin{aligned}
            &2  ,   & x< -1;\\
                    &\frac{1}{2}\left(3+ \sin\left(\frac{3\pi x}{2}\right)\right), & -1\le x < 1;\\
            &1,   & \text{otherwise};
          \end{aligned}
          \right.
    \qquad q(x,0) = 0,
    \label{exam55_F1}
    \end{equation}
    while the discontinuous initial condition is:
    \begin{equation}
    h(x,0) = \left\{
                \begin{aligned}
            &2  ,   & x < 0;\\
            &1,   & \text{otherwise};
          \end{aligned}
          \right.
    \qquad q(x,0) = 0.
    \label{exam55_F2}
    \end{equation}
    Since the stiffness of \eqref{SWe_MB4} is caused by the long time and strong friction, to investigate this, we consider two tests: 
    (1) we fix the final time $T=0.01$ and set the coefficient of friction $g k^2=1.0$, where $g=9.812$. We take four different values of $\eps$,  $1,\,0.2,\,0.1,\,0.0005$, and compare the numerical solutions with the  solution obtained from the limiting equation \eqref{lim_E4-1}, which we denote as ``lim''. The results obtained by T3S4 are shown in Fig.~\ref{Fig_ex55_1}, with $N=200$.
    From the figure, we observe that as $\eps$ goes to zero, the solutions approach to the solution for the limiting equation, namely, our scheme preserve the AP property.
    (2) we fix $\eps=1$ (namely without dimensionalization), but take a final time $T=0.01\theta$ where $\theta = \sqrt{gk^2}$, to test the long-time behavior of the solutions.
    We take four different values of $\theta=1,\,5,\,10,\,50$. The results of water depth $h$ are shown in Fig.~\ref{Fig_ex55_2} with the same grid $N=200$.
    It can be seen that the numerical results are almost the same as the first case, which confirm the scaling in \eqref{Dim_kt}.
    
    Finally, we compare the results obtained by the high order T3S4 scheme with those obtained by the first order T1S1 scheme. We take two cases.
    In the first case, the coefficient of the friction is $gk^2=1$ and the final time is $T=0.01$, where the parameter $\eps=0.0005$. In the second case, we choose $gk^2=2500$, $T=0.5$, and $\eps=1$. The results are shown in Fig.~\ref{Fig_ex55_3} and Fig.~\ref{Fig_ex55_4}, respectively.
    From the results, we observe that as $\eps$ goes to zero, or the simulation time increases, the first order solution only match the high order solution under a very refined mesh size,  which show the necessity of using high order schemes. 
    
    \begin{figure}[hbtp]
      \begin{center}
      \mbox{
          \subfigure[water depth $h$]
      {\includegraphics[width=7.0cm]{./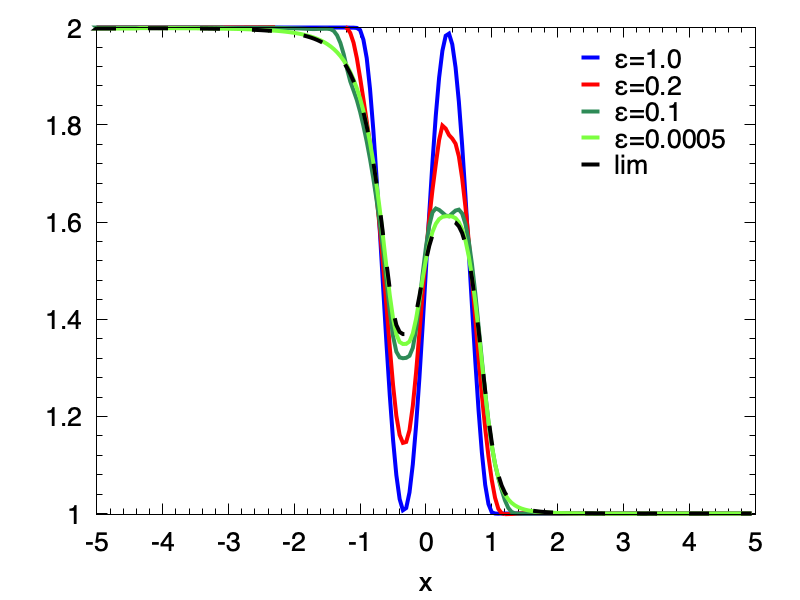}}\quad
          \subfigure[momentum $q$]
      {\includegraphics[width=7.0cm]{./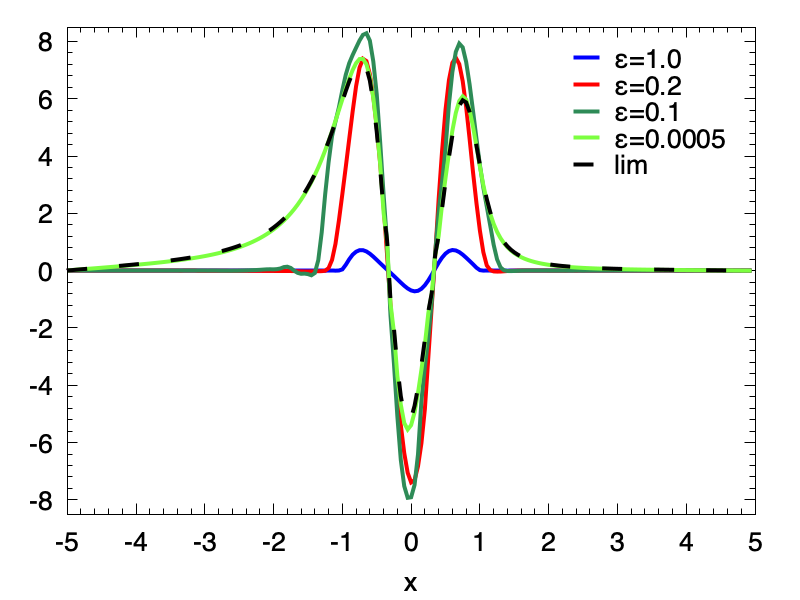}}\quad
      }
      \mbox{
          \subfigure[water depth $h$]
      {\includegraphics[width=7.0cm]{./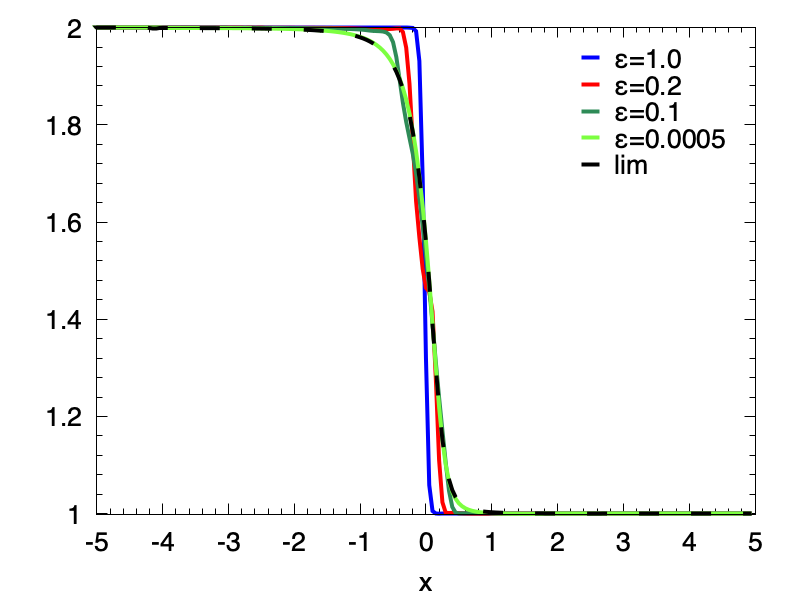}}\quad
          \subfigure[momentum $q$]
      {\includegraphics[width=7.0cm]{./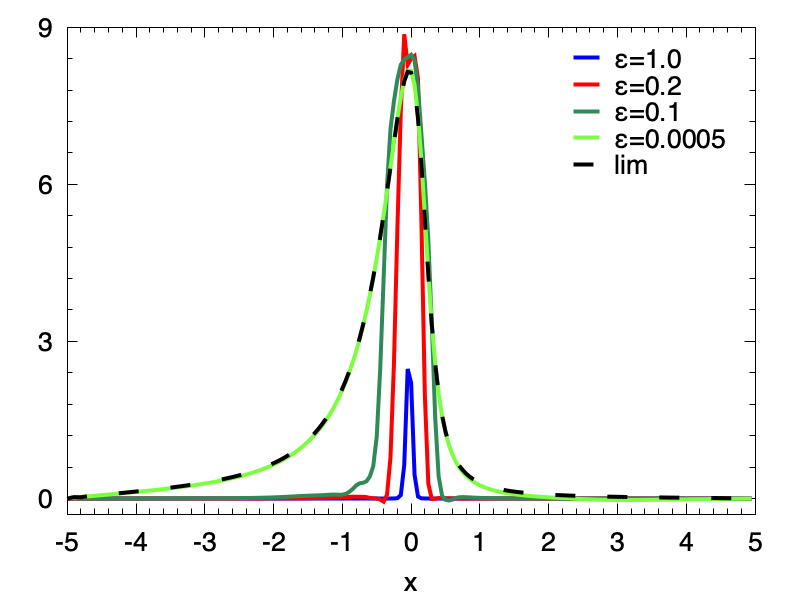}}\quad
      }
      \caption{ Example~\ref{exam55}. Numerical solutions of the water depth $h$ (left) and the momentum $q$ (right) for the smooth initial condition \eqref{exam55_F1} (top) and the discontinuous initial condition \eqref{exam55_F2} (bottom) with different $\eps$'s .}
      \label{Fig_ex55_1}
      \end{center}
      \end{figure}

      \begin{figure}[hbtp]
        \begin{center}
        \mbox{
            \subfigure[water depth $h$]
        {\includegraphics[width=7.0cm]{./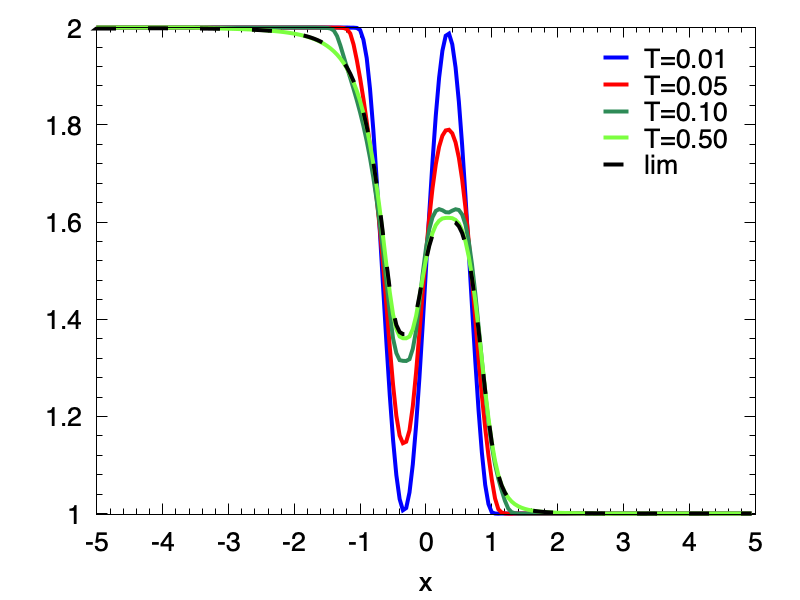}}\quad
            \subfigure[water depth $h$]
        {\includegraphics[width=7.0cm]{./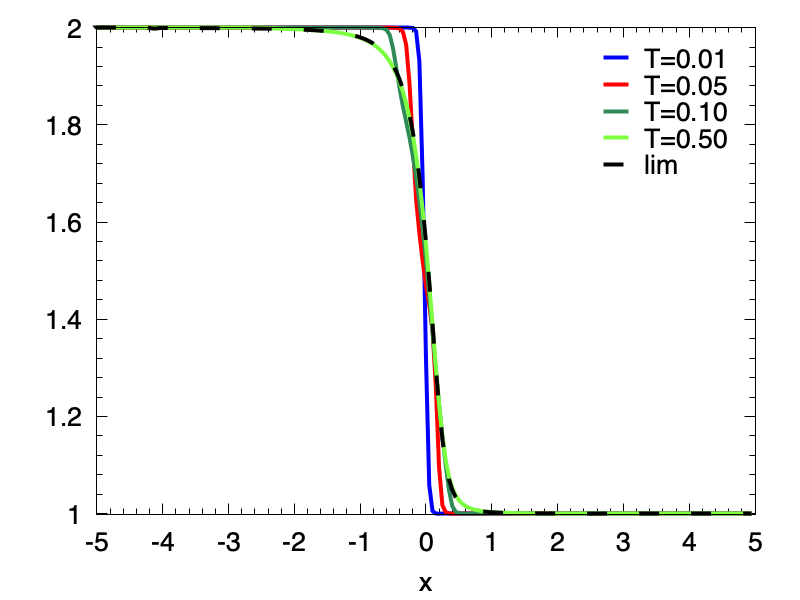}}\quad
        }
        \caption{ Example~\ref{exam55}. Numerical solutions of the water depth $h$ for the smooth initial condition \eqref{exam55_F1} (left) and the discontinuous initial condition \eqref{exam55_F2} (right) at different time. }
        \label{Fig_ex55_2}
        \end{center}
        \end{figure}
     
        \begin{figure}[hbtp]
          \begin{center}
          \mbox{
              \subfigure[$\eps=0.0005$, $gk^2=1$, $T=0.01$]
          {\includegraphics[width=7.0cm]{./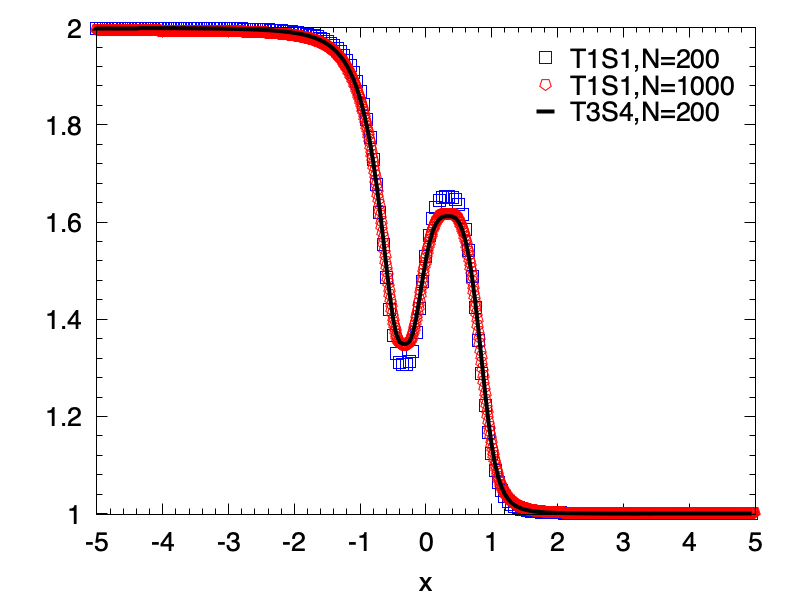}}\quad
              \subfigure[$\eps=0.0005$, $gk^2=1$, $T=0.01$]
          {\includegraphics[width=7.0cm]{./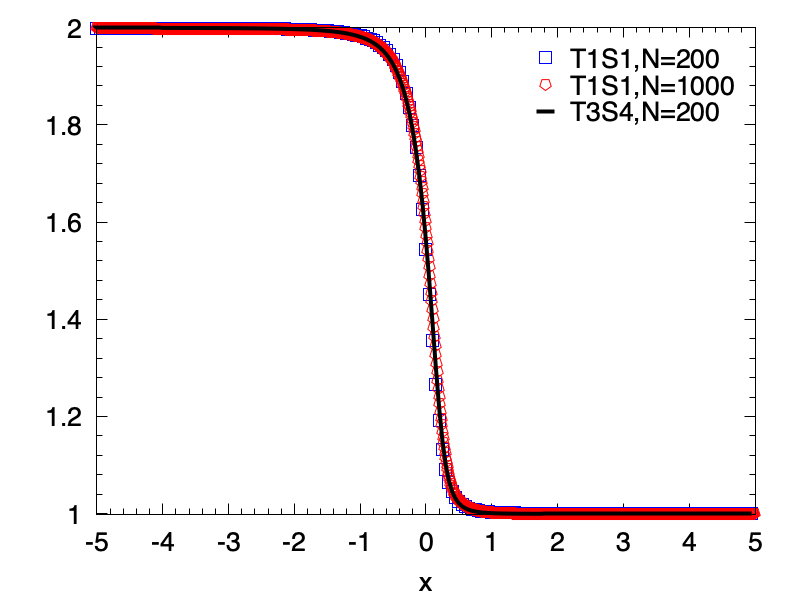}}\quad
          }
          \caption{ Example~\ref{exam55}. Numerical solutions of the water depth $h$ for the smooth initial condition \eqref{exam55_F1} (left) and the discontinuous initial condition \eqref{exam55_F2} (right), with the T1S1 and T3S4 schemes.}
          \label{Fig_ex55_3}
          \end{center}
          \end{figure}
    
          \begin{figure}[hbtp]
            \begin{center}
            \mbox{
                \subfigure[$\eps=1$, $gk^2=2500$, $T=0.5$]
            {\includegraphics[width=7.0cm]{./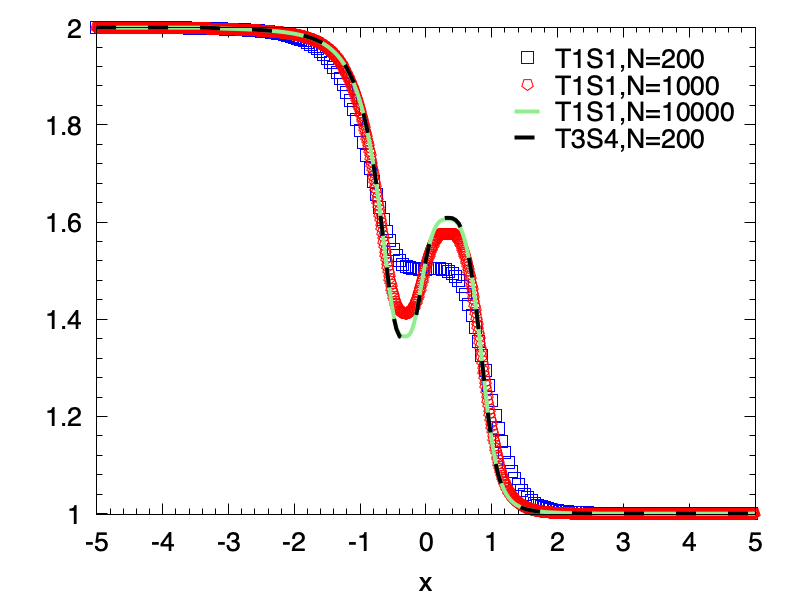}}\quad
                \subfigure[$\eps=1$, $gk^2=2500$, $T=0.5$]
            {\includegraphics[width=7.0cm]{./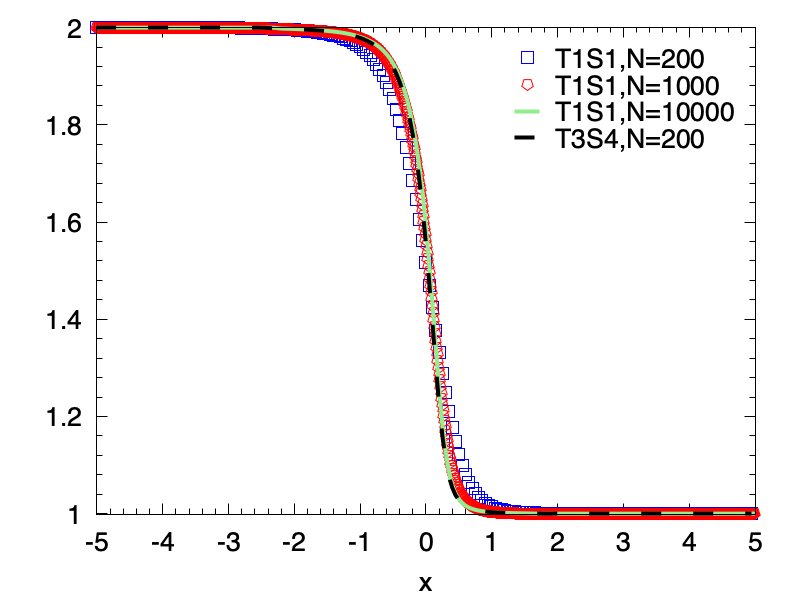}}\quad
            }
            \caption{ Example~\ref{exam55}. Numerical solutions of the water depth $h$ for the smooth initial conditions \eqref{exam55_F1} (left) and the discontinuous initial condition \eqref{exam55_F2} (right), with the T1S1 and T3S4 schemes.}
            \label{Fig_ex55_4}
            \end{center}
            \end{figure}
}\end{exa}

\begin{exa}{
  \em
  \label{exam24_V2}
  In this example, we now test the ability of our proposed scheme to capture the propagation of small perturbations over strong friction. The initial condition is taken to be:   
  \begin{equation}
    h(x,0) = \left\{ 
      \begin{aligned}
        &0.5 + 0.1\sin(10\pi(x-1.1)), & 1.1 \le  x \le 1.2;\\
        &0.5, & \text{otherwise};
      \end{aligned}
    \right.
    \quad q(x,0) = 0.
  \end{equation}
  The computational domain is $[0,2]$, with a gravitational constant  $g=9.812$ and a flat bottom $b(x) = 0$. We set the friction coefficient be $k=1$, corresponding to strong friction. Inflow and outflow boundary conditions are considerred. We compute the solution up to a final time $T=0.02$. Five different values of $\eps$ are taken, i.e $1,\,0.5,\,0.1,\,10^{-2},\,10^{-6}$. As $\eps$ becoming smaller, the friction becomes stronger. We compare the results with the one obtained by the limiting equation \eqref{lim_E4-1}, which is denoted as ``lim'',
  and present the numerical results in Fig.~\ref{Fig1_exam24_V2}.
  From the results, we observe that as $\eps$ decreases, the diffusion of the system becomes to dominate. Our scheme can be uniformly stable and capture the small perturbations well. Besides, when $\eps =10^{-6}$, the solution matches very well with the one obtained by the limiting equation \eqref{lim_E4-1}, namely our scheme is AP in the diffusive regime.
  \begin{figure}[hbtp]
    \begin{center}
    \mbox{
        \subfigure[the surface level $H$]
    {\includegraphics[width=7.0cm]{./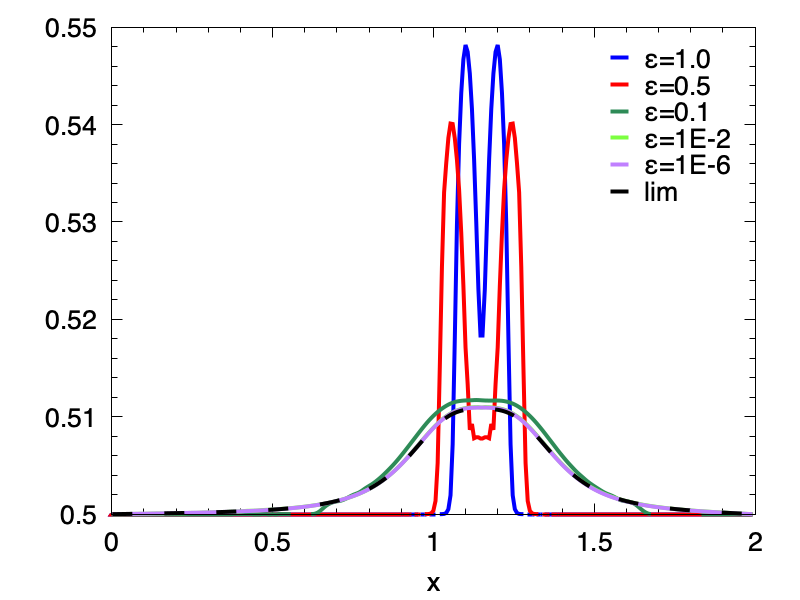}}\quad
        \subfigure[momentum $q$]
    {\includegraphics[width=7.0cm]{./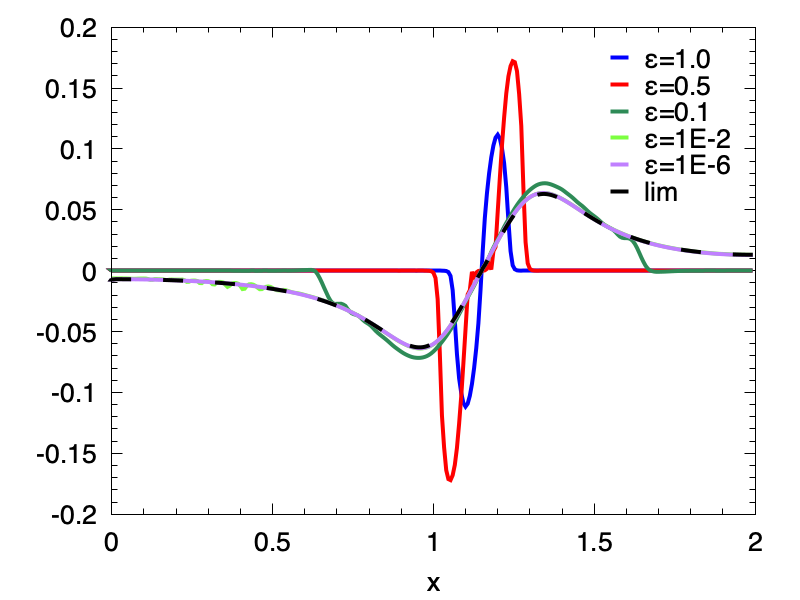}}\quad
    }
    \caption{ Example~\ref{exam24_V2}. Numerical solutions of the surface level $H$ (left) and the momentum $q$ (right), with strong friction.}
    \label{Fig1_exam24_V2}
    \end{center}
  \end{figure}
   
  }\end{exa}

  \begin{exa}{
    \em
    \label{exam24}
    In this example, we will check the well-balanced property of our scheme. 
    An equilibrium initial condition is taken, with  a small perturbation adding to a surface level, following the study in \cite{yang2021high}.
    This setup is an extension of the work presented in \cite{xing2005high, xing2006high, huang2022high}, where the system is considered without friction.

    Here we consider a non-flat trigonometric bottom topology, which is given by
    \begin{equation}
    \label{ex6_1}
    b(x) =
    \left\{
                \begin{aligned}
            &0.25(\cos(10\pi(x-1.5))+1)  ,   & 1.4 \le x\le 1.6;\\
            &0,   & \text{otherwise};
          \end{aligned}
          \right.
    \end{equation}
    The initial condition with a small disturbance is taken:
    \begin{equation}
    \label{ex6_2}
    h(x,0) + b(x) =
    \left\{
                \begin{aligned}
            &1.001  ,   & 1.1 \le x\le 1.2;\\
            &1,   & \text{otherwise};
          \end{aligned}
          \right.
    \qquad q(x,0) = 0.
    \end{equation}
    Here, the computational domain is $[0,2]$, $g=9.812$ and $k=1$. The final time $T=0.2$.
    
    To demonstrate that our scheme is not affected by the dimensionless parameter $\eps$, we scale the initial condition into a dimensionless form, using $\hat{t} = \eps t$, $\hat{k} = \eps k$, and $\hat{q} = \frac{q}{\eps}$, as shown in \eqref{Dim_kt} and \eqref{Dim_q}. 
    We take $N=200$, and consider an inflow and outflow boundary condition. Two different values of $\eps$ are used, i.e., $1$ and $0.6$. 
    After we obtain the numerical results for different $\eps$'s, we transform them back to the dimension case. The results are shown in Fig. ~\ref{Fig_ex24_1}. We can see the results match each other very well, namely, they are not affected by the scaling. Besides, these results are also the same as in \cite{yang2021high}, which can capture the small perturbations well. This demonstrates that our scheme is well-balanced independent of the parameter $\eps$. 
    Finally, we compare the results obtained by the high order T3S4 scheme and the first order T1S1 scheme, which are shown in Fig.~\ref{Fig_ex24_2}. Clearly the high order results are much better than the first order results, and they match well when the mesh is fine enough for the first order scheme.
    
    \begin{figure}[hbtp]
    \begin{center}
    \mbox{
    {\includegraphics[width=8.0cm]{./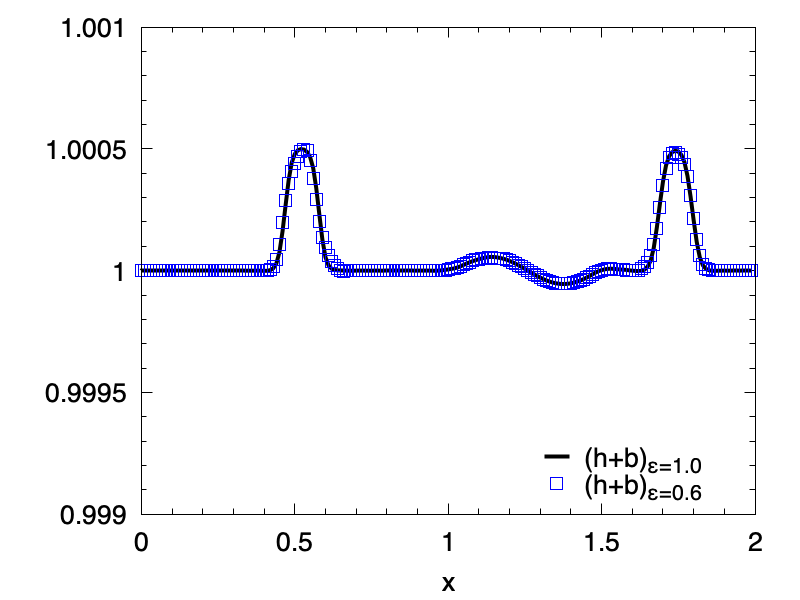}}\quad
    {\includegraphics[width=8.0cm]{./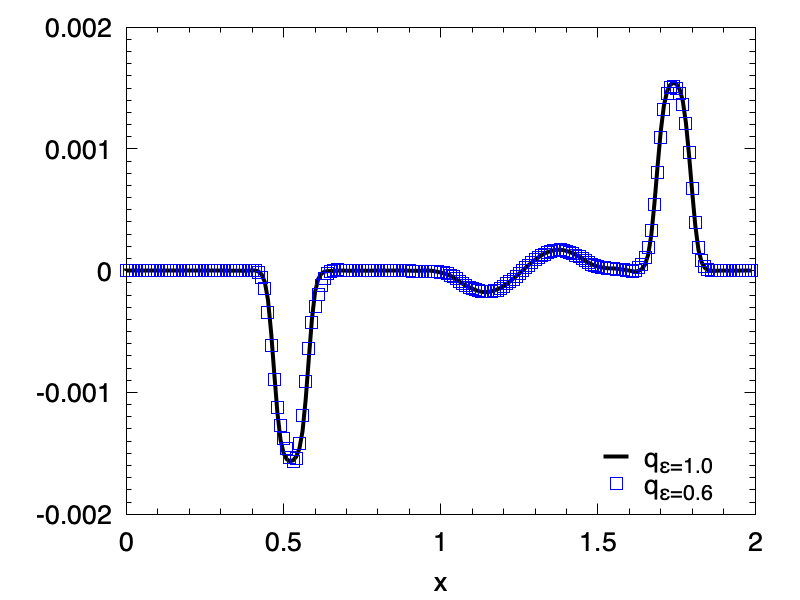}}\quad
    }
    \caption{ Example~\ref{exam24}. Numerical solutions of the surface level $H$ (left) and the momentum $q$ (right) for the T3S4 scheme with different $\eps$'s.}
    \label{Fig_ex24_1}
    \end{center}
    \end{figure}
    
    \begin{figure}[hbtp]
      \begin{center}
      \mbox{
      {\includegraphics[width=8.0cm]{./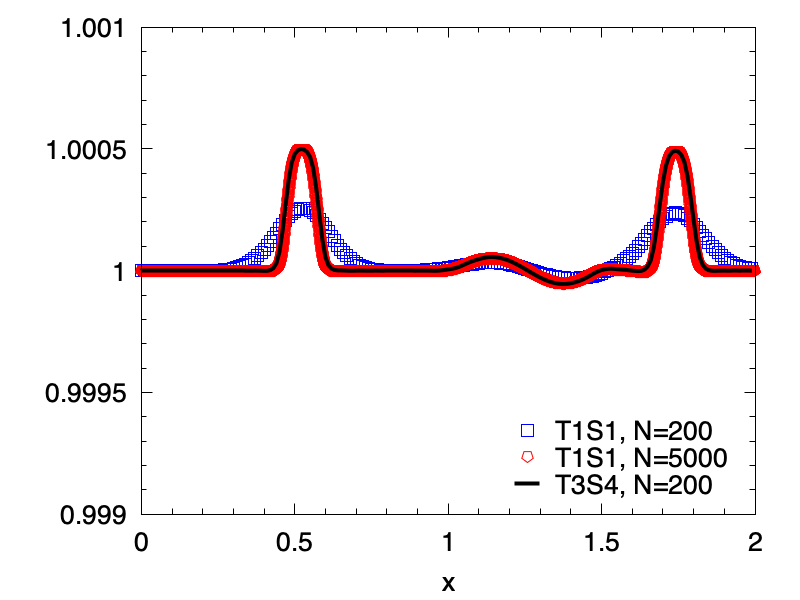}}\quad
      {\includegraphics[width=8.0cm]{./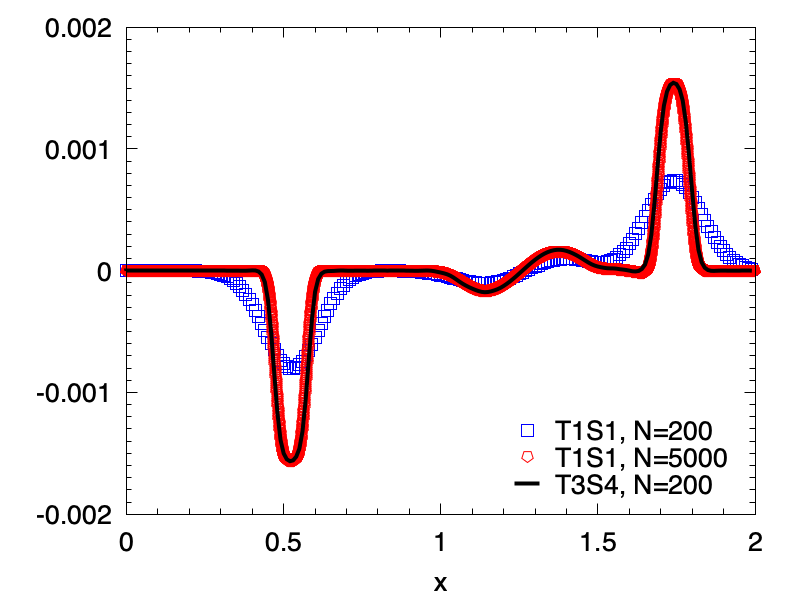}}\quad
      }
      \caption{ Example~\ref{exam24}. Numerical solutions of the surface level $H$ (left) and the momentum $q$ (right), with the T1S1 and T3S4 schemes. $\eps=1$.}
      \label{Fig_ex24_2}
      \end{center}
      \end{figure}
    
    }\end{exa}

    \begin{exa}{
      \em\label{exam42}
      In this example, we consider a shock wave with a non-flat bottom \cite{yang2021high}. The Manning coefficient is $k = 0.09$, $g=9.812$. The non-flat bottom has two humps which is defined as follows:
      \begin{equation}
      b(x) =
      \left\{
            \begin{aligned}
              &\max(0,0.25-20(x -0.25)^2),   & 0 \le x \le 0.5;\\
              &\max(0,0.25-20(x -0.75)^2),   & 0.5 < x \le 1;
            \end{aligned}
            \right.
      \end{equation}
      The initial condition is given by
      \begin{equation}
      h(x,0)+ b(x) =
      \left\{
                  \begin{aligned}
              &1  ,   & 0 \le x \le 0.5;\\
              &0.5,   & 0.5 < x \le 1;
            \end{aligned}
            \right.
            \quad
            q(x,0) = 0.
      \end{equation}
      We take a computational domain $[0,1]$ with $N=200$ and set $\eps=1$.
      An inflow and outflow boundary condition is adopted, with a final time $T=0.08$. The numerical results are shown in Fig.~\ref{Fig_ex42_1}. 
      From the results, we observe that two waves are generated from the initial discontinuity and propagate to left and right, respectively. 
      We also compare the results obtained by the high order T3S4 scheme, with those obtained by the first order T1S1 scheme. Similarly the high order scheme has better resolution than the first order scheme. This example demonstrates the good performance of our high order scheme for shock capturing.
      
      \begin{figure}[hbtp]
      \begin{center}
      \mbox{\subfigure[water surface $h+b$]
      {\includegraphics[width=8.0cm]{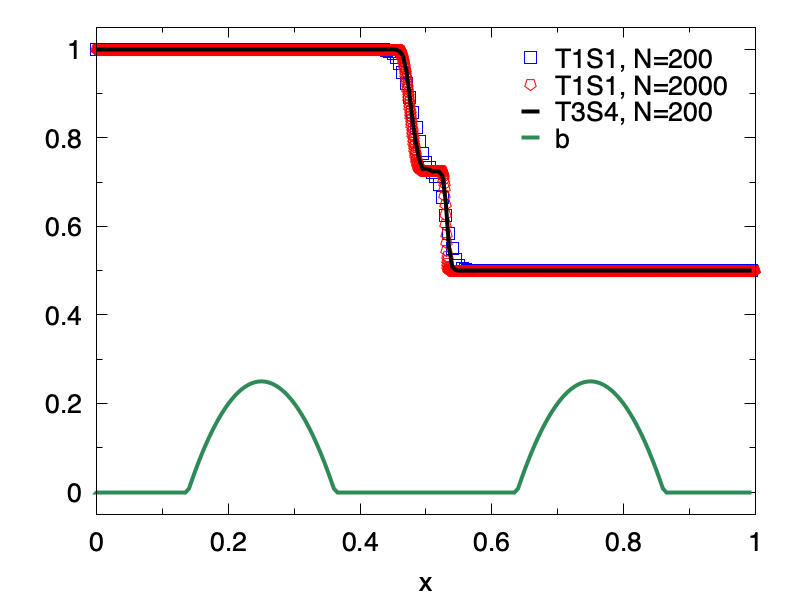}}\quad
      \subfigure[momentum $q$]
      {\includegraphics[width=8.0cm]{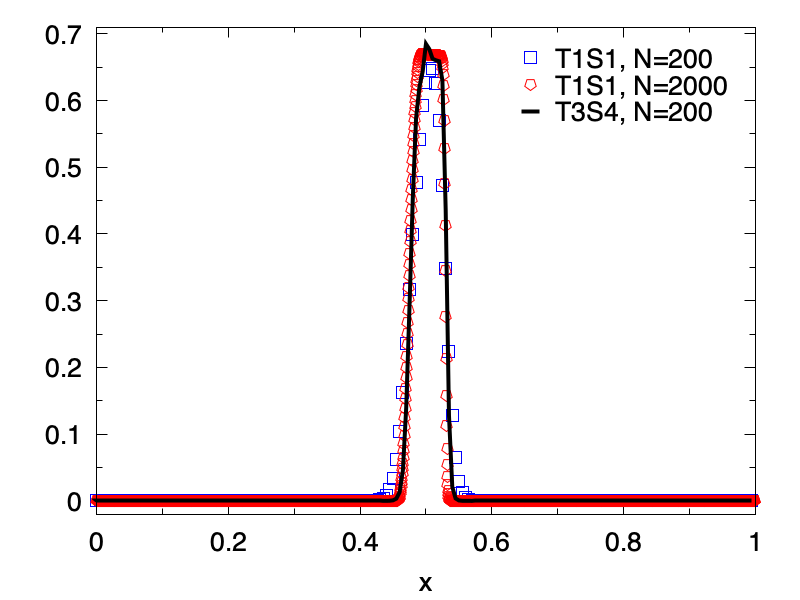}}\quad
      }
      \caption{ Example~\ref{exam42}. Numerical solutions of the surface level $H$ (left) and the momentum $q$ (right). $\eps=1$. $T=0.08$.}
      \label{Fig_ex42_1}
      \end{center}
      \end{figure}
    
      
      }\end{exa}
      
      \begin{exa}{
        \em
        \label{2D_linear_test}({\bf{2D accuracy test with linear friction}})
        In this example, we test the orders of accuracy for our scheme in the 2D case. Similarly we consider the system \eqref{SWe_MB4} with a linear friction of $\gamma=1$. 
        The initial conditions are given by:
        \begin{equation}
          \left\{
          \begin{aligned}
            &   h(x,y,0) = \sin(\pi (x+y)) + 2, \\
            & (hu)(x,y,0) = -2\pi\cos(\pi (x+y))\left(\sin(\pi (x+y)) + 2 \right),\\
            & (hv)(x,y,0) = -2\pi\cos(\pi (x+y))\left(\sin(\pi (x+y)) + 2 \right).
          \end{aligned}
          \right.
      \end{equation}
      The bottom is flat with $b(x,y) = 0$.
      The computational domain is $\Omega = [0,2]^2$, and we set $g=2$. Periodic boundary conditions are used. The computational domain is divided uniformly into $N_\ell^2$ meshes, where $N_\ell = N_0\cdot 2^k$ with $N_0=8$ and $\ell=0,1,2,3,4,5$. 
      Since the exact solution is not available, we take the numerical solution obtained by $N^2=512^2$ as the reference solution, and compute the numerical errors by comparing the numerical solution and the reference solution. Here we consider three different values of $\eps$: $1,10^{-2},10^{-6}$.
       
      The numerical errors and orders are shown in Table~ \ref{T_2D_linear_test_1} at a final time $T=0.01$.
      From the results, we observe that around fourth order accuracy for $\eps=1$, and third order for $\eps=10^{-6}$ are obtained, which is similar to the one-dimensional case.
      This demonstrates that our scheme performs well in both hyperbolic and diffusive regimes. 
      An order reduction also appears for the intermediate regime of $\eps=10^{-2}$, similar to the accuracy tests in \cite{boscarino2019high, boscarino2022high, huang2022high}.
        \begin{table}[htbp]
          \caption{ Example \ref{2D_linear_test}. $L^1$ errors and orders  for $h$, $hu$, and $hv$ with $\eps=1,10^{-2},10^{-6}$.}
          \begin{center}
            \begin{tabular}{c|c|c|c|c|c|c|c}
              \hline\hline	
            \multicolumn{1}{c}{\multirow{2}*{}}&\multicolumn{1}{|c|}{\multirow{2}*{N}}&\multicolumn{2}{c|}{ $1$}&\multicolumn{2}{c|}{$10^{-2}$}&\multicolumn{2}{c}{$10^{-6}$}\\
              \cline{3-8}
            \multicolumn{1}{c}{} &\multicolumn{1}{|c|}{} &error& order& error&order& error&order\\  \hline\hline
            \multicolumn{1}{c|}{\multirow{5}*{$h$}}
        & 16 &     5.94E-03 &       --&     2.57E-04 &       --&     2.97E-04 &       --  \\ \cline{2-8}
        & 32 &     5.62E-04 &     3.40&     1.25E-05 &     4.36&     1.78E-05 &     4.06  \\ \cline{2-8}
        & 64 &     2.37E-05 &     4.57&     4.50E-06 &     1.48&     1.37E-06 &     3.71\\ \cline{2-8}
        &128 &     1.31E-06 &     4.17&     1.40E-06 &     1.69&     1.32E-07 &     3.37 \\ \cline{2-8}
        &256 &     8.14E-08 &     4.01&     2.78E-07 &     2.33&     1.39E-08 &     3.25\\ \hline\hline
        \multicolumn{1}{c|}{\multirow{5}*{$hu$}}
        & 16 &     3.82E-02 &       --&     5.16E-03 &       --&     5.52E-03 &       -- \\ \cline{2-8}
        & 32 &     3.72E-03 &     3.36&     3.14E-04 &     4.04&     2.94E-04 &     4.23\\ \cline{2-8}
        & 64 &     1.67E-04 &     4.48&     9.44E-05 &     1.73&     1.80E-05 &     4.03 \\ \cline{2-8}
        &128 &     9.43E-06 &     4.15&     3.00E-05 &     1.65&     2.47E-06 &     2.86 \\ \cline{2-8}
        &256 &     5.52E-07 &     4.10&     7.29E-06 &     2.04&     3.31E-07 &     2.90  \\ \hline\hline
        \multicolumn{1}{c|}{\multirow{5}*{$hv$}}
        & 16 &     3.82E-02 &       --&     5.16E-03 &       --&     5.52E-03 &       --\\ \cline{2-8}
        & 32 &     3.72E-03 &     3.36&     3.14E-04 &     4.04&     2.94E-04 &     4.23\\ \cline{2-8}
        & 64 &     1.67E-04 &     4.48&     9.44E-05 &     1.73&     1.80E-05 &     4.03\\ \cline{2-8}
        &128 &     9.43E-06 &     4.15&     3.00E-05 &     1.65&     2.47E-06 &     2.86\\ \cline{2-8}
        &256 &     5.52E-07 &     4.10&     7.29E-06 &     2.04&     3.31E-07 &     2.90\\ \hline\hline
      \end{tabular}
      \end{center}
      \label{T_2D_linear_test_1}
      \end{table}
        }
        \end{exa}

      \begin{exa}
        {\em
        \label{exam5}
        In this example, we now test the accuracy of the scheme for  nonlinear friction in the two-dimensional case.
        Similar to the one-dimensional problem, the ``manufactured'' exact solutions are taken to be:
        \begin{subequations}
          \begin{align*}
          &h(x,y,t)    = 2 + \eps^2\sin(\pi (x+y-2t)),   \\
          &q_1(x,y,t) = 2 + \eps^2\sin(\pi (x+y-2t)),   \\
          &q_2(x,y,t) = 2 + \eps^2\sin(\pi (x+y-2t)),
          \end{align*}
          \end{subequations}
         which satisfy the SWEs with extra source terms
        \begin{equation}
        \left\{
        \begin{array}{ll}
        \partial_t h + \partial_x q_1 + \partial_y q_2 = 0,\\[3mm]
        \partial_t q_1+ \partial_x \left(\frac{q_1^2}{h}\right)  + \partial_y\left(\frac{q_1q_2}{h} \right)+ \frac{1}{\eps^2}\partial_x\left(\frac{g}{2}h^2\right) = -\frac{1}{\eps^2}\frac{gk^2\sqrt{q_1^2 +q_2^2}}{h^{7/3}}q_1 + \frac{gk^2}{\eps^2}\left[ 2 +  \right.\\[3mm]  \hspace{3.2cm} \left.+\eps^2\sin(\pi(x + y - 2t)) \right]^{-1/3} +  g \pi h \cos(\pi(x + y - 2t)), \\[3mm]
        \partial_t q_2+ \partial_x \left(\frac{q_1q_2}{h}\right)  + \partial_y\left(\frac{q_2^2}{h} \right)+ \frac{1}{\eps^2}\partial_y\left(\frac{g}{2}h^2\right) = -\frac{1}{\eps^2}\frac{gk^2\sqrt{q_1^2 +q_2^2}}{h^{7/3}}q_2 + \frac{gk^2}{\eps^2}\left[ 2 +  \right.\\[3mm]  \hspace{3.2cm} \left.+\eps^2\sin(\pi(x + y - 2t)) \right]^{-1/3} +  g \pi h \cos(\pi(x + y - 2t)).
        \end{array}
        \right.
        \end{equation}
        $g=1$ and $k=1$. The computational domain is $[0,2]^2$, and the final time is $T=0.04$. We only consider $\eps=1$ as the one-dimensional case. The errors and orders are shown in Table~\ref{T_exam5_1}.
        From the table, we observe a desired fifth order accuracy.
        \begin{table}[htbp]
          \caption{ Example \ref{exam5}. The $L^1$ errors and orders  for $h$, $hu$, and $hv$ with $\eps=1$.}
          \begin{center}
            \begin{tabular}{c|c|c|c|c|c|c}
              \hline\hline	
              \multicolumn{1}{c|}{\multirow{2}*{N}}&\multicolumn{2}{c|}{ $h$}&\multicolumn{2}{c|}{$hu$}&\multicolumn{2}{c}{$hv$}\\
            \cline{2-7}
            \multicolumn{1}{c|}{} &error& order& error&order& error&order\\  \hline\hline
          8 &     3.20E-02 &       --&     6.46E-02 &       --&     6.46E-02 &       --\\ \cline{1-7}
         16 &     1.74E-03 &     4.20&     3.62E-03 &     4.16&     3.62E-03 &     4.16\\ \cline{1-7}
         32 &     6.36E-05 &     4.78&     1.03E-04 &     5.13&     1.03E-04 &     5.13 \\ \cline{1-7}
         64 &     2.00E-06 &     4.99&     3.12E-06 &     5.05&     3.12E-06 &     5.05\\ \cline{1-7}
        128 &     6.23E-08 &     5.01&     9.62E-08 &     5.02&     9.62E-08 &     5.02 \\ \cline{1-7}
        256 &     1.93E-09 &     5.01&     2.93E-09 &     5.04&     2.93E-09 &     5.04\\ \hline\hline
      \end{tabular}
      \end{center}
      \label{T_exam5_1}
      \end{table}
          
      }\end{exa}
      
      \begin{exa}
      {\em
      \label{exam2D_2}
      In the final example, we check the well-balanced property for our scheme in the two-dimensional case.
      We consider a bottom topography being an isolated elliptical shaped hump 
          \begin{equation}
          \label{Ex8_1}
          b(x,y) = 0.8e^{-5(x-0.9)^2-50(y-0.5)^2},
          \end{equation}
      and the initial conditions are
      \begin{subequations}
          \begin{equation}
          \label{Ex8_2}
          h(x,y,0)+b(x,y) = \left\{
          \begin{aligned}
          & 1+0.01,  & 0.05  \le x \le 0.15; \\
          & 1,       &\text{otherwise};
          \end{aligned}
          \right.
          \end{equation}
          \begin{equation}
          \label{Ex8_3}
          hu = hv = 0.
          \end{equation}
      \end{subequations}
      This example has been studied in \cite{yang2021high}.
      The computational domain is $[0,2]\times[0,1]$, 
      with an inflow and outflow boundary condition in the $x$ direction and a periodic boundary condition in the $y$ direction.
      Similar experiments were conducted in prior works such as \cite{xing2005high, xing2006high, huang2022high},  without friction. 
      In this case, a small friction  has been added with  Manning coefficient  $k=0.09$. 
      The numerical solutions of the surface level $H=h+b$ are shown in Fig.~\ref{Fig_ex2D_2_1} with two different meshes, i.e., $200\times100$ and $400\times200$. 
      The initial perturbation is separated into two waves propagating to left and right. 
      With the left-propagating wave moving out of the domain, the right-propagating wave interacts with the non-flat bottom topography and is well captured by our proposed T3S4 scheme. 
      The numerical results are similar to those in \cite{yang2021high}.
      \begin{figure}[hbtp]
          \begin{center}
              \mbox{ \subfigure[$t=0.12$]					
                   {\includegraphics[width=7cm]{./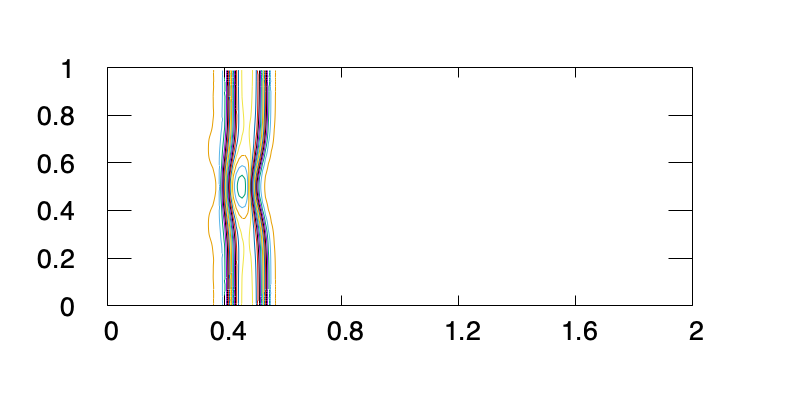}}\quad		
                     \subfigure[$t=0.12$]		
               {\includegraphics[width=7cm]{./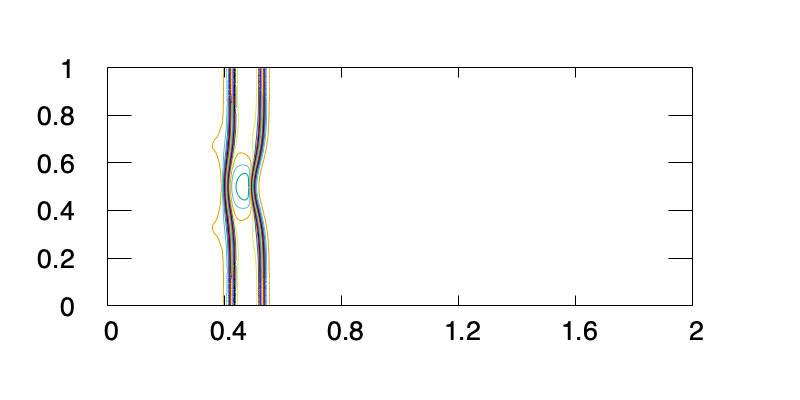}}
              }
              \mbox{ \subfigure[$t=0.24$]					
                   {\includegraphics[width=7cm]{./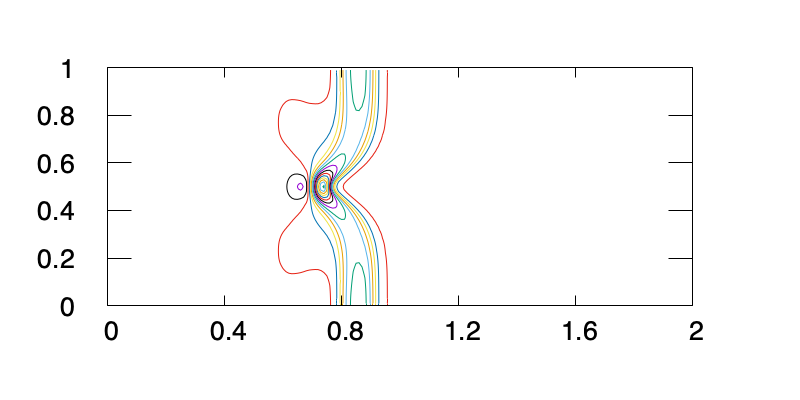}}\quad		
                     \subfigure[$t=0.24$]		
             {\includegraphics[width=7cm]{./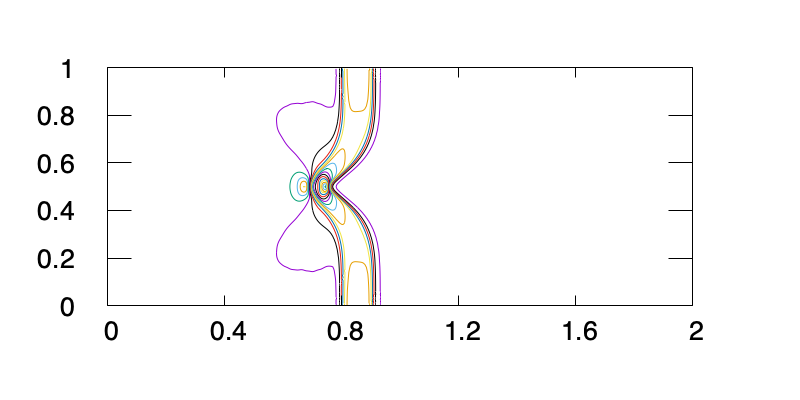}}
              }
              \mbox{ \subfigure[$t=0.36$]					
                   {\includegraphics[width=7cm]{./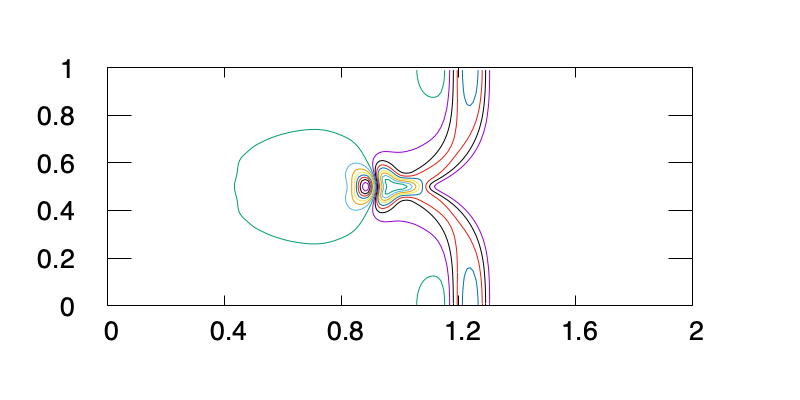}}\quad		
                     \subfigure[$t=0.36$]		
             {\includegraphics[width=7cm]{./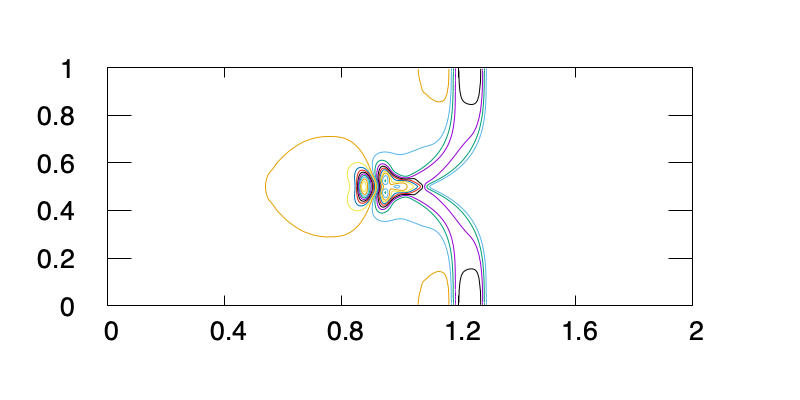}}
              }
              \mbox{\subfigure[$t=0.48$]					
                   {\includegraphics[width=7cm]{./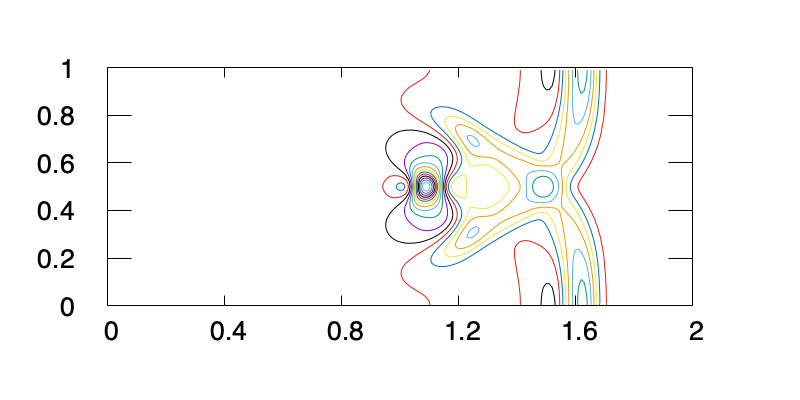}}\quad		
                    \subfigure[$t=0.48$]		
             {\includegraphics[width=7cm]{./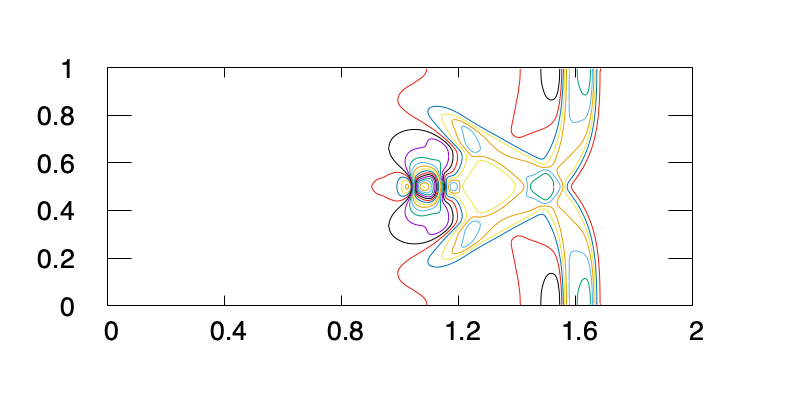}}
              }
              \mbox{\subfigure[$t=0.6$]					
                   {\includegraphics[width=7cm]{./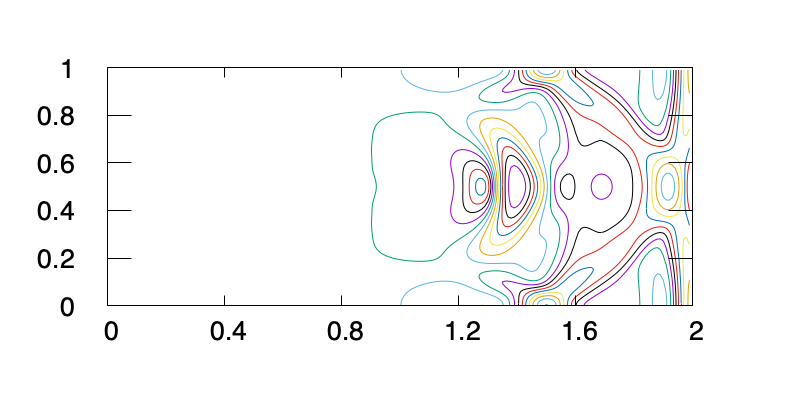}}\quad		
                    \subfigure[$t=0.6$]		
             {\includegraphics[width=7cm]{./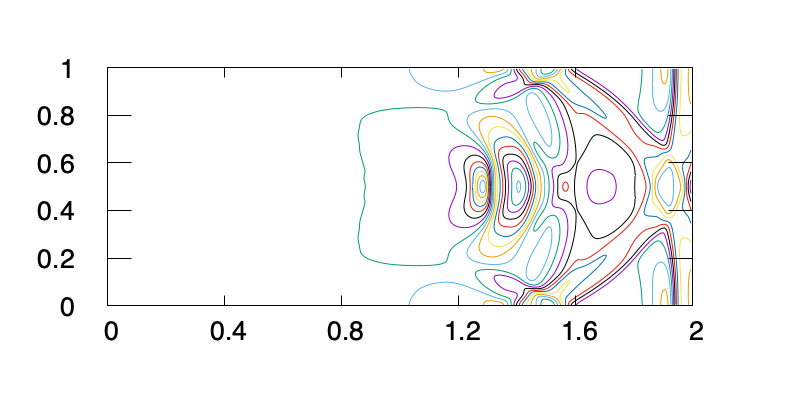}}
              }
              \caption{ Example~\ref{exam2D_2}. Numerical solutions of the surface level $H$.
      From top to bottom: at $t=0.12$ from 0.9998 to 1.0060; at  $t=0.24$ from 0.9967 to 1.0130;
      at  $t=0.36$ from 0.9901 to 1.0097; $t=0.48$ from 0.9906 to 1.0043; $t=0.6$ from 0.9955 to 1.0045, 30 contour lines are used. Mesh: $200\times100$ (left);  $400\times200$ (right). }
              \label{Fig_ex2D_2_1}
            \end{center}
          \end{figure}

      }
      \end{exa}

\section{Conclusion}
\label{sec6}
\setcounter{equation}{0}
\setcounter{figure}{0}

In this paper, we have proposed a high order AP finite difference WENO scheme for the SWEs with Manning friction and a bottom topology. The scheme is proved to be AP, AA and well-balanced, which are also confirmed by one- and two-dimensional numerical tests. In this work, the diffusion limit are added as a penalization, and an additive IMEX RK method has been utilized. In the future, we may further consider a semi-implicit approach without adding the diffusion terms.

\section*{Acknowledgements}

Guanlan Huang and Tao Xiong are partially supported by National Key R\&D Program of China No. 2022YFA1004500, NSFC grant No. 11971025 and 92270112, NSF of Fujian Province No. 2023J02003, and the Strategic Priority Research Program of Chinese Academy of Sciences Grant No. XDA25010401.

Sebastiano Boscarino is partially supported by the Italian Ministry of Instruction, University and Research (MIUR), with funds coming from PRIN Project 2022 (2022KA3JBA, entitled “Advanced numerical methods for time dependent parametric partial differential equations and applications”) and from Italian Ministerial grant PRIN 2022 PNRR “FIN4GEO: Forward and Inverse Numerical Modeling of hydrothermal systems in volcanic regions with application to geothermal energy exploitation.”, No. P2022BNB97.  Furthermore, Sebastiano Boscarino is supported for this work by the Spoke 1 ”FutureHPC \& BigData” of the Italian Research Center on High-Performance Computing, Big Data and Quantum Computing (ICSC) funded by MUR "Missione 4 Componente 2 Investimento 1.4: Potenziamento strutture di ricerca e creazione di campioni nazionali di R\&S (M4C2-19 )”. S. Boscarino is a member of the INdAM Research group GNCS.

\bibliographystyle{abbrv}
\bibliography{refer}
\end{document}